\numberwithin{equation}{section}
\newcommand{\addresseshere}{%
  \enddoc@text\let\enddoc@text\relax
}
\theoremstyle{plain}
\newtheorem{thm}{Theorem}[section]
\newtheorem{lem}[thm]{Lemma}
\newtheorem{prop}[thm]{Proposition}
 \theoremstyle{definition}
\newtheorem{rem}[thm]{Remark}
\newtheorem{notn}[thm]{Notation}
\newcommand{\mb}[1]{\mathbb{#1}}
\newcommand{\mf}[1]{\mathfrak{#1}}
\newcommand{\mr}[1]{\mathrm{#1}}
\newcommand{\eps}{\varepsilon}
\newcommand{\pcoor}[1]{%
  \begingroup\lccode`~=`: \lowercase{\endgroup
  \edef~}{\mathbin{\mathchar\the\mathcode`:}\nobreak}%
  [
  \begingroup
  \mathcode`:=\string"8000
  #1%
  \endgroup 
  ]
}
\begin{document}
\title{All lines on a smooth cubic surface in terms of three skew lines}

\author{Stephen McKean}
\address{Department of Mathematics \\ Duke University \\ Durham \\ NC} 
\email{mckean@math.duke.edu}

\author{Daniel Minahan}
\address{School of Mathematics \\ Georgia Institute of Technology \\ Atlanta \\ GA} 
\email{dminahan6@gatech.edu}

\author{Tianyi Zhang}
\address{School of Mathematics \\ Georgia Institute of Technology \\ Atlanta \\ GA} 
\email{kafuka@gatech.edu}

\subjclass[2010]{14N15}

\begin{abstract}
Jordan showed that the incidence variety of a smooth cubic surface containing 27 lines has solvable Galois group over the incidence variety of a smooth cubic surface containing 3 skew lines. As noted by Harris, it follows that for any smooth cubic surface, there exist formulas for all 27 lines in terms of any 3 skew lines. In response to a question of Farb, we compute these formulas explicitly. We also discuss how these formulas relate to Schl\"afli's count of lines on real smooth cubic surfaces.
\end{abstract}

\maketitle

\section{Introduction}
Given a complex smooth cubic surface $S$ containing three skew lines, we compute the equations of all 27 lines on $S$. We then apply these equations to study lines on real smooth cubic surfaces. Schl\"afli showed that real smooth cubic surfaces contain 3, 7, 15, or 27 lines~\cite{Sch58}. Moreover, a real smooth cubic surface contains three skew lines if and only if the cubic surface contains an \textit{elliptic line} (as defined by Segre \cite{Seg42}). Given a real smooth cubic surface $S$ that contains an elliptic line, we give a cubic polynomial $g(t)\in\mb{R}[t]$ (Proposition~\ref{prop:c_i}) and a pair of complex numbers $s_1,s_2$ (Notation~\ref{notn:s_i}) that determine the number of real lines contained in $S$.

\begin{thm}\label{thm:main theorem}
Let $S$ be a real smooth cubic surface that contains an elliptic line.
\begin{enumerate}[(a)]
\item $S$ contains exactly 7 real lines if and only if $g(t)$ has only one real root and $s_1,s_2$ are not real.
\item $S$ contains exactly 15 real lines if and only if (i) all roots of $g(t)$ are real and $s_1,s_2$ are not real, or (ii) $g(t)$ has only one real root and $s_1,s_2$ are real.
\item $S$ contains 27 real lines if and only if all roots of $g(t)$ and $s_1,s_2$ are real.
\end{enumerate}
\end{thm}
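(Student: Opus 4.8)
The plan is to exploit the explicit equations for the $27$ lines obtained earlier in the paper: once the three skew lines are fixed, the remaining $24$ lines are produced by first solving the cubic $g(t)$ of Proposition~\ref{prop:c_i} and then extracting the quantities $s_1,s_2$ of Notation~\ref{notn:s_i} from auxiliary quadratics. Since $S$ is defined over $\mb{R}$, complex conjugation $\sigma$ acts on $S(\mb{C})$ and permutes the $27$ lines, preserving their incidence configuration. A line is real precisely when it is $\sigma$-fixed, the non-real lines come in conjugate pairs, and so the number of real lines is always odd. Because $S$ contains an elliptic line, it contains three \emph{real} skew lines, which we take as the base configuration; consequently these three lines---and every line whose defining equation is rational in them---are automatically $\sigma$-invariant, and moreover $g(t)\in\mb{R}[t]$ has real coefficients, so $\sigma$ permutes the roots of $g(t)$, fixing the real ones and swapping each complex-conjugate pair.

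I would then stratify the $27$ lines according to which algebraic data enter their formulas: a first block of lines rational in the three skew lines (always real), a block whose members are governed by the roots of $g(t)$, a block governed by $s_1,s_2$, and a block depending on both. The solvability of the Galois tower---cubic first, then quadratics---is exactly what permits this clean separation. For each block I would read the conjugation action directly off the explicit formulas: a line in the $g$-block is $\sigma$-fixed if and only if the root of $g(t)$ feeding its formula is real, a line in the $s$-block is $\sigma$-fixed if and only if $s_1,s_2$ are real, and a line in the mixed block requires both conditions simultaneously.

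Assembling the counts is then bookkeeping. In the base case---$g(t)$ with a single real root and $s_1,s_2$ non-real---only the first block survives, giving $7$ real lines. Turning on the $g$-block (all roots of $g(t)$ real) or the $s$-block ($s_1,s_2$ real), but not both, raises the count to $15$ in each case; turning on both additionally makes the mixed block real and recovers all $27$. Tabulating the four combinations of the two binary conditions produces the values $7,15,15,27$, which is exactly the content of (a), (b)(i)--(ii), and (c). No combination yields only $3$ real lines, consistent with the elliptic-line hypothesis: a surface with exactly $3$ real lines has them coplanar rather than skew and hence possesses no elliptic line.

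The main obstacle is the bookkeeping in the middle step: matching each of the $24$ non-base lines to the precise algebraic quantity that controls its reality, and verifying against the explicit formulas that $\sigma$ acts as claimed---especially on the lines that depend jointly on a root of $g(t)$ and on $s_1,s_2$. One must also confirm the internal consistency of the pairing, namely that within each block the non-real lines split into an even number of conjugate pairs in every one of the four cases, which is the arithmetic check underlying the transitions $7\to 15\to 27$.
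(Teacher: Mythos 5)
Your overall strategy---read off the reality of each of the $27$ lines from the explicit formulas, organized by which algebraic data (a root of $g$, the pair $s_1,s_2$, or both) enters each formula---is the same as the paper's. But your bookkeeping contains a concrete error, and the error is located exactly where you flag the ``main obstacle.'' The blocks have sizes $7$ (the base lines $E_1,E_2,E_3,C_4,L_{1,4},L_{2,4},L_{3,4}$), $8$ (the lines $C_5,C_6,L_{i,5},L_{i,6}$ governed by $t_5,t_6$), $6$ (the lines $C_1,C_2,C_3,L_{1,2},L_{1,3},L_{2,3}$ governed by $s_1,s_2$), and $6$ (the mixed block $E_4,E_5,E_6,L_{4,5},L_{4,6},L_{5,6}$). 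Your rule ``a mixed-block line is real iff both conditions hold'' would give a count of $7+6=13$ in case (b)(ii) ($g$ with one real root, $s_1,s_2$ real), which is not one of Schl\"afli's values $3,7,15,27$; the rule is false. In that case $E_4$ and $L_{5,6}$ \emph{are} real even though $t_5,t_6$ are not: they are the two transversals to the four skew lines $C_5,C_6,L_{1,4},L_{2,4}$, and complex conjugation swaps $C_5\leftrightarrow C_6$ while fixing $L_{1,4},L_{2,4}$, hence stabilizes this configuration and fixes each transversal. So conjugation can fix a line whose defining formula involves non-real quantities, because it permutes those quantities within the formula. Your stratification must account for this, and the ``only if'' directions of your block-level reality criteria need individual justification rather than being read off formally.

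The paper avoids determining the reality of every line in every case by combining partial information with Schl\"afli's trichotomy: e.g.\ for (a) it shows the hypotheses force at most $14$ real lines and at least $7$, hence exactly $7$; for (b)(i) it exhibits $15$ real lines and two non-real ones, hence exactly $15$. Your proposal lacks this closing device, and without it you are obliged to resolve the conjugation action on the mixed block in all four cases --- which, as the $13$ versus $15$ discrepancy shows, you have not done correctly. Incorporating Schl\"afli's count (or a correct case analysis of the transversal pairs in the mixed block) is needed to repair the argument.
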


\begin{rem}
Let $k\subseteq\mb{C}$ be a field. Let $S$ be a smooth cubic surface over $k$ with three skew $k$-rational lines. As pointed out by the referee, the equations in this paper allow one to characterize the number of $k$-rational lines on $S$. See \cite{McK21} for a similar application of these equations to the study of lines on cubic surfaces over $\mb{Q}$.
\end{rem}

In algebraic geometry, enumerative problems can often be rephrased in terms of covering spaces of incidence varieties. By studying the monodromy of these covers, one can speak of the Galois group of an enumerative problem. These Galois groups can provide additional insight into the enumerative problems at hand. For example, Jordan showed that the Galois group of 27 lines on a smooth cubic surface is the odd orthogonal group $O^-_6(\mb{Z}/2\mb{Z})\leq S_{27}$~\cite{Jor57} (see also \cite[pp. 715-718]{Har79}). Since $O^-_6(\mb{Z}/2\mb{Z})$ is not a solvable group, there is no equation in radicals for the 27 lines on a given smooth cubic surface. However, given a smooth cubic surface and a particular arrangement of lines contained therein, we obtain a new Galois group $G\leq O^-_6(\mb{Z}/2\mb{Z})$ that may be solvable.

Let $S$ be a smooth cubic surface over an algebraically closed field of characteristic 0. Let $\mb{P}^{19}$ be the projective space parametrizing cubic surfaces in $\mb{P}^3$, and let $\mb{G}(1,3)$ be the Grassmannian of lines in $\mb{P}^3$. Consider the incidence varieties
\begin{align*}
\Phi_{27}&=\{(S,L_1,...,L_{27})\in\mb{P}^{19}\times\mb{G}(1,3)^{27}:L_i\subseteq S\text{ for all }i\},\\
\Phi_{3,\text{skew}}&=\{(S,L_1,L_2,L_3)\in\mb{P}^{19}\times\mb{G}(1,3)^3:\\
&\qquad L_i\subseteq S\text{ for all }i\text{ and }L_i\cap L_j=\emptyset\text{ for all }i\neq j\}.
\end{align*}

Jordan showed that the covering $\Phi_{27}\to\Phi_{3,\text{skew}}$ has Galois group of order 12, which is thus solvable~\cite{Jor57}. In fact, Harris noted that this Galois group is dihedral \cite[p. 718]{Har79}. Because the Galois group of $\Phi_{27}\to\Phi_{3,\text{skew}}$ is solvable, Harris remarked there exists a formula in radicals for all 27 lines on a smooth cubic surface in terms of the cubic surface and any three skew lines that it contains \cite[pp. 718--719]{Har79}. At the {\it Roots of Topology} workshop at the University of Chicago in 2018, Benson Farb asked if these formulas could be written out explicitly. The bulk of this paper is devoted to giving explicit equations for all lines on a smooth cubic surface in terms of any three skew lines on the same surface. We then use these equations to prove Theorem~\ref{thm:main theorem} in Section~\ref{sec:real case}.

\subsection{Outline}
The layout of the paper is as follows. In Section~\ref{sec:notation}, we introduce notation and conventions for the paper. In Sections~\ref{sec:step1} through~\ref{sec:step5}, we assume that we are given a smooth cubic surface $S$ containing the skew lines $E_1=\mb{V}(x_0,x_1)$, $E_2=\mb{V}(x_2,x_3)$, and $E_3=\mb{V}(x_0-x_2,x_1-x_3)$ and solve for the remaining 24 lines. In Section~\ref{sec:general}, we solve the general case using a projective change of coordinates. We discuss how the formulas obtained in this paper relate to Schl\"afli's enumeration of real lines on smooth cubic surfaces over $\mb{R}$ in Section~\ref{sec:real case}. In Appendix~\ref{sec:graphics}, we include visualizations of real cubic surfaces with 27, 15, and 7 lines. We are greatly indebted to Steve Trettel for preparing these graphics. In Appendix~\ref{sec:table}, we list the equations of all 27 lines on a smooth cubic surface containing $E_1,E_2,E_3$.

Given three skew lines on a smooth cubic surface, there are various ways to geometrically recover the remaining 24 lines. Harris describes one such method~\cite[pp. 718-719]{Har79}, which we utilize for most of our approach. However, we occasionally apply a different geometric method than Harris's when this simplifies the resulting computations. In Section~\ref{sec:step1}, we consider the quadric surface $Q$ defined by the skew lines $E_1,E_2,E_3$. These lines are contained in one ruling of $Q$, and the other ruling intersects $S$ in precisely three skew lines $C_4,C_5,C_6$. In Section~\ref{sec:step2}, we intersect $S$ with the planes spanned by $E_i$ and $C_j$. Each of these intersections consists of three lines by B\'ezout's Theorem; these lines are $E_i,C_j$, and $L_{i,j}$. For the next step, Harris suggests solving a quadratic equation defined by Pl\"ucker relations. This proved to be difficult in the generality needed for this paper, so we use a different approach in Section~\ref{sec:step3}. In particular, the four lines $E_1,E_2,L_{3,4},L_{3,5}$ are skew, so there are exactly two lines, called $C_3$ and $L_{1,2}$, meeting all four of these skew lines. Following Eisenbud and Harris~\cite[3.4.1]{EH16}, we let $Q'$ be the quadric surface defined by $E_1,E_2,L_{3,4}$. By B\'ezout's Theorem, $Q'\cap L_{3,5}$ consists of two points. Each of these points is contained in a line in the ruling that does not contain $E_1,E_2,L_{3,4}$; these two lines are $C_3$ and $L_{1,2}$. In Section~\ref{sec:step4}, we solve for four more lines. Here, the general technique is to repeat the process of Section~\ref{sec:step2}, using projective changes of coordinates as needed. While Harris suggests computing the remaining ten lines in this manner, the method becomes complicated for the lines $E_4,E_5,E_6,L_{4,5},L_{4,6}$, and $L_{5,6}$. In Section~\ref{sec:step5} we solve for these final six lines using the same process as in Section~\ref{sec:step3}.

\subsection{Related work}
Pannizut, Sert\"oz, and Sturmfels~\cite{PSS19} also give explicit equations for certain lines on smooth cubic surfaces. Let $S$ be a smooth cubic surface whose defining polynomial $f=\sum_{i+j+k+l=3}\alpha_{i,j,k,l}x_0^ix_1^jx_2^kx_3^l$ has full support (that is, $\alpha_{i,j,k,l}\neq 0$ for all $i+j+k+l=3$). Pick 6 skew lines contained in $S$ and label them $E_1,...,E_6$. Then there exists a unique blow-down $\pi:S\to\mb{P}^2$ that sends $E_1,...,E_6$ to distinct points with $\pi(E_1)=\pcoor{1:0:0}$, $\pi(E_2)=\pcoor{0:1:0}$, $\pi(E_3)=\pcoor{0:0:1}$, and $\pi(E_4)=\pcoor{1:1:1}$. The authors give local charts $\{U\}$ on $S$ and formulas for the quadratic maps $\{\pi|_U:U\to\mb{P}^2\}$~\cite[Theorem 4.2]{PSS19}. All lines on $S$ can be recovered by $\pi^{-1}$, so this result gives equations for all lines on a smooth cubic surface (whose defining polynomial has full support) in terms of 6 skew lines.

\subsection{Acknowledgements}
We thank Benson Farb for asking this paper's motivating question. We also thank Matt Baker, Dan Margalit, Joe Rabinoff, Bernd Sturmfels, and Jesse Wolfson for helpful suggestions and support. We thank the anonymous referee for their detailed comments and suggestions that greatly improved the clarity and accuracy of the paper. Finally, we are especially grateful to Steve Trettel for the included graphics.

\section{Notation and conventions}\label{sec:notation}
Throughout this paper, we will be working in $\mb{P}^3:=\mb{P}^3_\mb{C}=\operatorname{Proj}(\mb{C}[x_0,x_1,x_2,x_3])$.

\subsection{Lines on cubic surfaces}
Following~\cite{Har79}, we denote the 27 lines on a smooth cubic surface $S$ by $E_i,C_j$ for $1\leq i,j\leq 6$ and $L_{i,j}$ for $i\neq j$ and $1\leq i,j\leq 6$. As Harris describes~\cite[p. 717]{Har79}, there are 72 different sets of six disjoint lines on $S$:
\begin{align*}
&\{E_i\}_{i=1}^6,\\
&\{E_i,E_j,E_k,L_{m,n}\}_{m,n\neq i,j,k},\\
&\{E_i,C_i,L_{j,k}\}_{k\neq i},\\
&\{C_i,C_j,C_k,L_{m,n}\}_{m,n\neq i,j,k},\\
&\{C_i\}_{i=1}^6.
\end{align*}

\subsection{Cubic surface}\label{sec:surface}
For the rest of the paper, let $S=\mb{V}(f)$ be a smooth cubic surface containing the skew lines $E_1=\mb{V}(x_0,x_1),E_2=\mb{V}(x_2,x_3)$, and $E_3=\mb{V}(x_0-x_2,x_1-x_3)$, where
\[f(x_0,x_1,x_2,x_3)=\sum_{i+j+k+l=3}\alpha_{i,j,k,l}x_0^ix_1^jx_2^kx_3^l.\]
Since $S$ contains $E_1,E_2,E_3$, it follows that $f(0,0,x_2,x_3)=f(x_0,x_1,0,0)=f(x_0,x_1,x_0,x_1)=0$. Evaluating $f(1,0,0,0)$, $f(0,1,0,0)$, $f(0,0,1,0)$, $f(0,0,0,1)$, $f(1,1,0,0)$, $f(0,0,1,1)$, $f(1,0,1,0)$, $f(0,1,0,1)$, $f(1,1,1,1)$, and $f(1,-1,1,-1)$ induces the following relations:
\begin{align}\label{relations}
    \alpha_{3,0,0,0}=\alpha_{0,3,0,0}=\alpha_{0,0,3,0}=\alpha_{0,0,0,3}&=0,\\
    \alpha_{2,1,0,0}=\alpha_{1,2,0,0}=\alpha_{0,0,2,1}=\alpha_{0,0,1,2}&=0,\nonumber\\
    \alpha_{0,2,0,1}+\alpha_{0,1,0,2}=\alpha_{2,0,1,0}+\alpha_{1,0,2,0}&=0,\nonumber\\
    \alpha_{0,2,1,0}+\alpha_{1,0,0,2}+\alpha_{1,1,0,1}+\alpha_{0,1,1,1}&=0,\nonumber\\
    \alpha_{0,1,2,0}+\alpha_{2,0,0,1}+\alpha_{1,0,1,1}+\alpha_{1,1,1,0}&=0.\nonumber
\end{align}

\subsection{Projective change of coordinates}
An invertible matrix $A\in\mr{GL}_4(\mb{C})$ gives a projective change of coordinates by $\pcoor{a_0:a_1:a_2:a_3}\mapsto\pcoor{b_0:b_1:b_2:b_3}$, where $(b_0,b_1,b_2,b_3)^T=A(a_0,a_1,a_2,a_3)^T$. By slight abuse of notation, we also denote this projective change of coordinates by $A:\mb{P}^3\to\mb{P}^3$. Given a variety $X=\mb{V}(g_1,...,g_n)$, the change of coordinates $A$ takes $X$ to $AX=\mb{V}(g_1\circ A^{-1},...,g_n\circ A^{-1})$. We also note that if $\ell=\sum a_ix_i$ is a linear function and $\ell\circ A^{-1}=\sum b_ix_i$, then $(A^{-1})^T(a_0,a_1,a_2,a_3)^T=(b_0,b_1,b_2,b_3)^T$.

\section{Three lines from a biruled quadric surface}\label{sec:step1}
The three skew lines $E_1,E_2,E_3$ define the quadric surface $Q=\mb{V}(x_0x_3-x_1x_2)$. Moreover, $Q$ contains the rulings $M_s=\{\pcoor{s:as:1:a}\in\mb{P}^3\}$ and $N_t=\{\pcoor{t:1:bt:b}\in\mb{P}^3\}$, with $M_\infty=\{\pcoor{1:a:0:0}\}$ and $N_\infty=\{\pcoor{1:0:b:0}\}$. Note that $M_0=E_1$, $M_\infty=E_2$, and $M_1=E_3$.

\begin{prop}\label{prop:c_i}
Let $t_4,t_5,t_6$ be the roots of
\begin{align*}
g(t)&=(\alpha_{2,0,1,0})t^3+(\alpha_{2,0,0,1}+\alpha_{1,1,1,0})t^2+(\alpha_{0,2,1,0}+\alpha_{1,1,0,1})t+\alpha_{0,2,0,1}\\
&=-((\alpha_{1,0,2,0})t^3+(\alpha_{0,1,2,0}+\alpha_{1,0,1,1})t^2+(\alpha_{1,0,0,2}+\alpha_{0,1,1,1})t+\alpha_{0,1,0,2}).
\end{align*}
Then $C_4=\mb{V}(x_0-t_4x_1,x_2-t_4x_3)$, $C_5=\mb{V}(x_0-t_5x_1,x_2-t_5x_3)$, and $C_6=\mb{V}(x_0-t_6x_1,x_2-t_6x_3)$.
\end{prop}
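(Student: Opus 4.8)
The plan is to determine exactly which lines of the ruling $\{N_t\}$ lie on $S$, and to show that the resulting condition on $t$ is precisely $g(t)=0$. The starting point is that $N_t=\mb{V}(x_0-tx_1,\,x_2-tx_3)$, so every point of $N_t$ can be written as $\pcoor{tu:u:tv:v}$ with $[u:v]\in\mb{P}^1$ (the remaining line $N_\infty=\mb{V}(x_1,x_3)$ being treated separately). Hence $N_t\subseteq S$ if and only if the binary cubic form $f(tu,u,tv,v)$ vanishes identically in $u,v$, i.e.\ if and only if each of its four coefficients, of $u^3$, $u^2v$, $uv^2$, and $v^3$, is zero.

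First I would expand
\[
f(tu,u,tv,v)=\sum_{i+j+k+l=3}\alpha_{i,j,k,l}\,t^{\,i+k}\,u^{\,i+j}\,v^{\,k+l},
\]
and sort the monomials by the pair $(i+j,\,k+l)$, which records the bidegree in $(u,v)$. The coefficient of $u^3$ collects the terms with $k+l=0$, namely $\alpha_{3,0,0,0}t^3+\alpha_{2,1,0,0}t^2+\alpha_{1,2,0,0}t+\alpha_{0,3,0,0}$, while the coefficient of $v^3$ collects the terms with $i+j=0$. By the first two lines of the relations~\eqref{relations} (which encode $E_1,E_2\subseteq S$), every coefficient appearing here vanishes, so the $u^3$ and $v^3$ coefficients are identically zero for all $t$.

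It then remains to compute the coefficient of $u^2v$ (terms with $i+j=2$, $k+l=1$) and the coefficient of $uv^2$ (terms with $i+j=1$, $k+l=2$). A direct tally shows that the coefficient of $u^2v$ is
\[
\alpha_{2,0,1,0}t^3+(\alpha_{2,0,0,1}+\alpha_{1,1,1,0})t^2+(\alpha_{0,2,1,0}+\alpha_{1,1,0,1})t+\alpha_{0,2,0,1},
\]
which is the first expression for $g(t)$, while the coefficient of $uv^2$ is
\[
\alpha_{1,0,2,0}t^3+(\alpha_{0,1,2,0}+\alpha_{1,0,1,1})t^2+(\alpha_{1,0,0,2}+\alpha_{0,1,1,1})t+\alpha_{0,1,0,2},
\]
which the statement asserts is $-g(t)$. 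The crux of the argument is to verify that these two coefficients are negatives of one another. Matching powers of $t$, this is exactly the content of the last three lines of~\eqref{relations}: the equalities $\alpha_{2,0,1,0}=-\alpha_{1,0,2,0}$ and $\alpha_{0,2,0,1}=-\alpha_{0,1,0,2}$ handle the $t^3$ and constant terms, and the two four-term relations handle the $t^2$ and $t$ terms. Consequently both coefficients vanish simultaneously precisely when $g(t)=0$.

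Combining the three steps, $N_t\subseteq S$ if and only if $g(t)=0$, so the roots $t_4,t_5,t_6$ of $g$ index the lines $C_j=N_{t_j}=\mb{V}(x_0-t_jx_1,\,x_2-t_jx_3)$ of the $N$-ruling lying on $S$. The computation itself is routine bookkeeping; the only genuine subtlety, which I would flag explicitly, is the possibility that the leading coefficient $\alpha_{2,0,1,0}$ vanishes. In that case $g$ drops degree and one of the three lines of the $N$-ruling on $S$ is the excluded line $N_\infty=\mb{V}(x_1,x_3)$ (indeed $N_\infty\subseteq S$ is equivalent to $\alpha_{2,0,1,0}=\alpha_{1,0,2,0}=0$). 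For the surfaces under consideration $g$ has three distinct roots, so $C_4,C_5,C_6$ are well defined, and I would close by recalling that distinct lines in a common ruling of the smooth quadric $Q$ are automatically skew, matching the description in the surrounding text.
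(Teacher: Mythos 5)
Your proof is correct and takes essentially the same approach as the paper: the paper parametrizes $N_t$ affinely as $\pcoor{t:1:bt:b}$ and computes $f(t,1,bt,b)=(b-b^2)g(t)$ using the relations~\eqref{relations}, which is exactly your bidegree bookkeeping specialized to $(u,v)=(1,b)$. Your explicit handling of the degenerate case $\alpha_{2,0,1,0}=0$, where one of the three ruling lines on $S$ is $N_\infty=\mb{V}(x_1,x_3)$, is a small refinement that the paper leaves implicit.
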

\begin{proof}
The lines $C_4,C_5,C_6$ are contained in both the cubic surface $S$ and the ruling $N_t$. A line $\{\pcoor{t:1:bt:b}:b\in\mb{C}\}$ is contained in $S$ if and only if $f(t,1,bt,b)=0$ for all $b$. Expanding this out and simplifying via the relations given in Equation~\ref{relations}, we have
\begin{align*}
f(t,1,bt,b)=(b-b^2)g(t),
\end{align*}
which vanishes for all $b\in\mb{C}$ if and only if $g(t)=0$. The roots $t_4,t_5,t_6$ of $g(t)$ will correspond to $C_4,C_5,C_6$. In particular, $N_{t_i}=\{\pcoor{t_i:1:bt_i:b}\}=\mb{V}(x_0-t_ix_1,x_2-t_ix_3)$ is a line contained in $S$. Since $N_{t_4},N_{t_5},N_{t_6}$ lie on the same ruling of $Q$, we may (without loss of generality) call them $C_4,C_5,C_6$, respectively. We also note that $t_i\neq t_j$ for $i\neq j$, or else we would have $C_i=C_j$, contradicting the overall count of 27 lines on a smooth cubic surface.
\end{proof}

\section{Nine residual lines}\label{sec:step2}
Next, we consider the planes $H_{i,j}$ spanned by $E_i$ and $C_j$ for $1\leq i\leq 3$ and $4\leq j\leq 6$. Intersecting each $H_{i,j}$ with $S$ will give a new line $L_{i,j}$ contained in $S$. In particular, since $E_i,C_j\subset S$, B\'ezout's Theorem implies that $S\cap H_{i,j}$ consists of $E_i,C_j$, and a third line.

\begin{prop}
We have the equations $L_{1,i}=\mb{V}(x_0-t_ix_1,\ell_{1,i})$, $L_{2,i}=\mb{V}(x_2-t_ix_3,\ell_{2,i})$, and $L_{3,i}=\mb{V}((x_0-x_2)-t_i(x_1-x_3),\ell_{3,i})$, where
\begin{align*}
\ell_{1,i}&=(t_i^2\alpha_{2,0,1,0}+t_i\alpha_{1,1,1,0}+\alpha_{0,2,1,0})x_1\\
&+(t_i\alpha_{1,0,2,0}+\alpha_{0,1,2,0})x_2\\
&+(t_i^2\alpha_{1,0,2,0}+t_i(\alpha_{0,1,2,0}+\alpha_{1,0,1,1})+\alpha_{0,1,1,1})x_3,\\
\ell_{2,i}&=(t_i\alpha_{2,0,1,0}+\alpha_{2,0,0,1})x_0\\
   &+(t_i^2\alpha_{2,0,1,0}+t_i(\alpha_{2,0,0,1}+\alpha_{1,1,1,0})+\alpha_{1,1,0,1})x_1\\
   &+(t_i^2\alpha_{1,0,2,0}+t_i\alpha_{1,0,1,1}+\alpha_{1,0,0,2})x_3,\\
\ell_{3,i}&=(t_i^2\alpha_{2,0,1,0}+t_i\alpha_{1,1,1,0}+\alpha_{0,2,1,0})x_1\\
    &+(t_i\alpha_{2,0,1,0}+\alpha_{0,1,2,0}+\alpha_{1,1,1,0})x_2\\
    &+(t_i\alpha_{2,0,0,1}-\alpha_{1,0,0,2})x_3.
\end{align*}
\end{prop}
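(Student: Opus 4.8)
The plan is to exploit Bézout's Theorem as described before the statement: the residual line $L_{k,i}$ is simply the third linear factor of the cubic obtained by restricting $f$ to the plane $H_{k,i}$, so the whole problem reduces to identifying that factor. First I would pin down each plane. Since $E_k$ and $C_i$ meet in a single point (for instance $E_1\cap C_i=\{\pcoor{0:0:t_i:1}\}$), they are coplanar and span a unique plane, which must be the linear combination of the defining forms of $E_k$ that vanishes on $C_i$. This forces $H_{1,i}=\mb{V}(x_0-t_ix_1)$, $H_{2,i}=\mb{V}(x_2-t_ix_3)$, and $H_{3,i}=\mb{V}((x_0-x_2)-t_i(x_1-x_3))$, which already accounts for the first defining equation of each $L_{k,i}$.

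For the second equation I would restrict $f$ to each plane. Consider $H_{1,i}$: eliminating $x_0=t_ix_1$ yields a ternary cubic $\tilde f(x_1,x_2,x_3)=f(t_ix_1,x_1,x_2,x_3)$. Setting $x_1=0$ gives $f(0,0,x_2,x_3)=0$ since $E_1\subseteq S$, so $x_1\mid\tilde f$; setting $x_2=t_ix_3$ gives $f(t_ix_1,x_1,t_ix_3,x_3)=f|_{C_i}=0$ (here using $g(t_i)=0$ from Proposition~\ref{prop:c_i} to know $C_i\subseteq S$), so $(x_2-t_ix_3)\mid\tilde f$. As $\tilde f$ is a cubic and these two linear forms are coprime, $\tilde f=x_1(x_2-t_ix_3)\,\ell_{1,i}$ up to a nonzero scalar, and hence $L_{1,i}=\mb{V}(x_0-t_ix_1,\ell_{1,i})$. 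The claimed $\ell_{1,i}$ is then obtained by carrying out the division $\tilde f/(x_1(x_2-t_ix_3))$, simplifying coefficients with the relations of Equation~\eqref{relations} and reducing powers of $t_i$ via $g(t_i)=0$.

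The remaining two cases follow the same template with the substitutions dictated by the plane equations: $x_2=t_ix_3$ for $H_{2,i}$, where one factors out the forms cutting $E_2$ and $C_i$ (namely $x_3$ and $x_0-t_ix_1$), and $x_0=x_2+t_i(x_1-x_3)$ for $H_{3,i}$, where one factors out $x_1-x_3$ and $x_2-t_ix_3$. In each case divisibility by the two known factors is guaranteed exactly as above, from $E_k\subseteq S$ and $C_i\subseteq S$, and the residual linear factor is read off and matched against $\ell_{2,i}$ and $\ell_{3,i}$.

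The conceptual content is light; the main obstacle is the computational bookkeeping. One must expand each restricted cubic in full, apply the relations of Equation~\eqref{relations} to kill or combine monomials, perform the polynomial division, and use $g(t_i)=0$ to normalize each quotient into the stated degree-$\le 2$ form in $t_i$. The $H_{3,i}$ computation is the most delicate, since eliminating $x_0$ via $x_0=x_2+t_i(x_1-x_3)$ produces the largest number of cross terms to reconcile.
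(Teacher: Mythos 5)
Your proposal is correct and follows essentially the same route as the paper: identify the planes $H_{k,i}$, restrict $f$, factor out the linear forms cutting $E_k$ and $C_i$, and read off the residual factor as $\ell_{k,i}$. The only difference is that you spell out the divisibility argument (via $E_k\subseteq S$ and $C_i\subseteq S$, the latter using $g(t_i)=0$) that the paper attributes to B\'ezout's Theorem, which is a harmless elaboration.
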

\begin{proof}
Note that $H_{1,i}=\mb{V}(x_0-t_ix_1)$, $H_{2,i}=\mb{V}(x_2-t_ix_3)$, and $H_{3,i}=\mb{V}((x_0-x_2)-t_i(x_1-x_3))$. Since $S\cap H_{i,j}$ consists of three lines, it is given by the vanishing of a product of three linear homogeneous polynomials. Two of these factors will be given by $E_i$ and $C_j$, and the third will define $L_{i,j}$. The intersection $S\cap H_{1,i}$ is given by the vanishing of $f(t_ix_1,x_1,x_2,x_3)$ by substituting $x_0=t_ix_1$. The linear factors corresponding to $E_1$ and $C_i$ are $x_1$ and $x_2-t_ix_3$, respectively. By simplifying (using the relations from Equation~\ref{relations} when necessary), one can check that $f(t_ix_1,x_1,x_2,x_3)=x_1(x_2-t_ix_3)\ell_{1,i}$. It follows that $L_{1,i}$ is given by the vanishing of $x_0-t_ix_1$ and $\ell_{1,i}$.

Similarly, the intersection $S\cap H_{2,i}$ is given by the vanishing of $f(x_0,x_1,t_ix_3,x_3)=x_3(x_0-t_ix_1)\ell_{2,i}$, again using the given relations to simplify when necessary. Thus $L_{2,i}=\mb{V}(x_2-t_ix_3,\ell_{2,i})$. The intersection $S\cap H_{3,i}$ is given by the vanishing of $f(x_2+t_i(x_1-x_3),x_1,x_2,x_3)=(x_1-x_3)(x_2-t_ix_3)\ell_{3,i}$, again simplifying with the given relations. Thus $L_{3,i}=\mb{V}((x_0-x_2)-t_i(x_1-x_3),\ell_{3,i})$.
\end{proof}

\section{Two more lines from a quadric surface}\label{sec:step3}
To solve for the lines $C_3$ and $L_{1,2}$, we need to find the two lines that meet the four skew lines $E_1,E_2,L_{3,4},L_{3,5}$. We first give a projective change of coordinates $A$ such that $AE_1=E_1$, $AE_2=E_2$, and $AL_{3,4}=E_3$. We then intersect $AL_{3,5}$ with the quadric surface $Q=\mb{V}(x_0x_3-x_1x_2)$ defined by $E_1,E_2,E_3$. The intersection $Q\cap AL_{3,5}$ will consist of two points, which gives two lines in the ruling $N_t=\{\pcoor{t:1:bt:b}\}$, namely $AC_3$ and $AL_{1,2}$. We then obtain $C_3$ and $L_{1,2}$ by applying the projective change of coordinates $A^{-1}$.

\begin{notn}\label{notn:c_i and d_i}
Let
\begin{align*}
c_1&=t_4^2\alpha_{2,0,1,0}+t_4\alpha_{1,1,1,0}+\alpha_{0,2,1,0},\\
c_2&=t_4\alpha_{2,0,1,0}+\alpha_{0,1,2,0}+\alpha_{1,1,1,0},\\
c_3&=t_4\alpha_{2,0,0,1}-\alpha_{1,0,0,2},
\end{align*}
so that $\ell_{3,4}=c_1x_1+c_2x_2+c_3x_3$. Similarly, let
\begin{align*}
d_1&=t_5^2\alpha_{2,0,1,0}+t_5\alpha_{1,1,1,0}+\alpha_{0,2,1,0},\\ d_2&=t_5\alpha_{2,0,1,0}+\alpha_{0,1,2,0}+\alpha_{1,1,1,0},\\ d_3&=t_5\alpha_{2,0,0,1}-\alpha_{1,0,0,2},
\end{align*} 
so that $\ell_{3,5}=d_1x_1+d_2x_2+d_3x_3$.
\end{notn}

\begin{prop}
We have that $d_1\neq 0$.
\end{prop}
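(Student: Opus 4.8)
The plan is to read off the geometric meaning of $d_1$. By Notation~\ref{notn:c_i and d_i}, $d_1$ is exactly the coefficient of $x_1$ in $\ell_{3,5}$, so I would evaluate the two defining forms of $L_{3,5}=\mb{V}((x_0-x_2)-t_5(x_1-x_3),\ell_{3,5})$ at the point $q=\pcoor{t_5:1:0:0}$. One finds $(x_0-x_2)-t_5(x_1-x_3)=t_5-t_5=0$ and $\ell_{3,5}(q)=d_1\cdot 1+d_2\cdot 0+d_3\cdot 0=d_1$. Thus $q$ always lies in the plane $H_{3,5}=\mb{V}((x_0-x_2)-t_5(x_1-x_3))$, and $q\in L_{3,5}$ if and only if $d_1=0$. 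I would therefore recast the claim $d_1\neq 0$ as the statement that $L_{3,5}$ does not pass through $q$, and argue this by contradiction.

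Next I would identify $q$ as an intersection point. Using $C_5=N_{t_5}=\mb{V}(x_0-t_5x_1,x_2-t_5x_3)$ from Proposition~\ref{prop:c_i} together with $E_2=\mb{V}(x_2,x_3)$, a direct substitution gives $C_5\cap E_2=\{q\}$, so $q$ is a point of $S$ lying on both of the lines $C_5$ and $E_2$. Now suppose $d_1=0$, so that $q\in L_{3,5}$ as well. Since $L_{3,5}$ and $C_5$ are distinct lines both contained in the plane $H_{3,5}$ and both passing through $q$, they span $H_{3,5}$. Because $S$ is smooth, every line of $S$ through $q$ lies in the tangent plane $T_qS$; applying this to $C_5$ and $L_{3,5}$ gives $H_{3,5}\subseteq T_qS$, and comparing dimensions of these two planes forces $T_qS=H_{3,5}$.

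The contradiction then comes from the third line $E_2$: as $E_2$ is a line of $S$ through $q$, the same smoothness argument yields $E_2\subseteq T_qS=H_{3,5}$. However, restricting the defining form of $H_{3,5}$ to $E_2$ (where $x_2=x_3=0$) gives $x_0-t_5x_1$, which vanishes on $E_2$ only at $q$, so $E_2\cap H_{3,5}=\{q\}$ and hence $E_2\not\subseteq H_{3,5}$, a contradiction. Therefore $d_1\neq 0$. The step demanding the most care is the passage $T_qS=H_{3,5}$: one must confirm that $C_5$ and $L_{3,5}$ are genuinely distinct lines (they are, being among the $27$) and that they both meet $q$ so that they truly span $H_{3,5}$, and then invoke smoothness to pin down the tangent plane. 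This is essentially the Eckardt-point observation that three lines of a smooth cubic surface meeting at a point must be coplanar, which $C_5$, $L_{3,5}$, and $E_2$ are not.
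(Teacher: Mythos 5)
Your proposal is correct, and it shares the paper's key observation --- that $d_1$ is exactly the value of $\ell_{3,5}$ at the point $q=[t_5:1:0:0]$, so $d_1=0$ would force $q\in L_{3,5}$, while $q$ visibly lies on $E_2$ --- but you finish the contradiction differently. The paper stops one step earlier: it simply invokes the classical incidence structure of the $27$ lines, under which $L_{3,5}$ and $E_2$ are skew, so they cannot share the point $q$. You instead bring in the third line $C_5$ through $q$ and run an Eckardt-point argument: all lines of $S$ through $q$ lie in $T_qS$, the distinct lines $C_5$ and $L_{3,5}$ span $H_{3,5}$ and hence pin down $T_qS=H_{3,5}$, and then $E_2\not\subseteq H_{3,5}$ gives the contradiction. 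Your route is longer but more self-contained, in that it does not lean on knowing a priori that $L_{i,j}$ is skew to $E_k$ for $k\neq i,j$; it only uses smoothness of $S$ and the explicit equations already derived. The one point you rightly flag --- that $C_5$ and $L_{3,5}$ are genuinely distinct --- is needed so that they actually span $H_{3,5}$; this holds because a plane section of a smooth cubic surface cannot contain a double line, so the three lines $E_3$, $C_5$, $L_{3,5}$ in $H_{3,5}\cap S$ are distinct. Either argument is acceptable; the paper's is shorter because it is willing to quote the standard intersection pattern of the $27$ lines, which it also uses elsewhere (e.g., in Section~\ref{sec:step4}).
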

\begin{proof}
Suppose $d_1=0$. Then $L_{3,5}=\mb{V}((x_0-x_2)-t_5(x_1-x_3),d_2x_2+d_3x_3)$ contains the point $\pcoor{t_5:1:0:0}$, which is also contained in $E_2=\mb{V}(x_2,x_3)$. However, these lines are necessarily skew, so we obtain a contradiction. Thus $d_1\neq 0$.
\end{proof}

Consider the projective change of coordinates given by
\[A^T=\begin{pmatrix}1&0&0&0\\ -t_4&c_1&0&0\\ 0&0&1&-c_2\\ 0&0&-t_4&-c_3 \end{pmatrix}.\]
Note that $AE_1=E_1$, $AE_2=E_2$, and $AL_{3,4}=E_3$. Any projective change of coordinates in $\mb{P}^3$ is determined by its image on three skew lines. Moreover, $A$ takes the skew lines $E_1,E_2,L_{3,4}$ to the skew lines $E_1,E_2,E_3$. It follows that $A$ is non-singular, so $\det{A}=-c_1(c_3+t_4c_2)\neq 0$. Thus
\[(A^{-1})^T=\begin{pmatrix}1&0&0&0\\ \tfrac{t_4}{c_1}&\tfrac{1}{c_1}&0&0\\ 0&0&\tfrac{c_3}{c_3+c_2t_4}&-\tfrac{c_2}{c_3+c_2t_4}\\ 0&0&-\tfrac{t_4}{c_3+c_2t_4}&-\tfrac{1}{c_3+c_2t_4}\end{pmatrix}\]
is non-singular with $A^{-1}E_1=E_1$, $A^{-1}E_2=E_2$, and $A^{-1}E_3=L_{3,4}$.

\begin{notn}\label{notn:u_i and v_i}
Let
\begin{align*}
u_1&=\frac{t_4-t_5}{c_1},\\
u_2&=-\frac{c_3+c_2t_5}{c_3+c_2t_4},\\
u_3&=\frac{t_4-t_5}{c_3+c_2t_4},\\
v_2&=\frac{c_1}{d_1}\cdot\frac{d_2c_3-d_3c_2}{c_3+c_2t_4},\\
v_3&=-\frac{c_1}{d_1}\cdot\frac{d_2t_4+d_3}{c_3+c_2t_4},
\end{align*}
so that $AL_{3,5}=\mb{V}(x_0+u_1x_1+u_2x_2+u_3x_3,x_1+v_2x_2+v_3x_3)$.
\end{notn}

Recall that $E_1,E_2,E_3$ are contained in the ruling $M_s=\{[s:as:1:a]\}$ of $Q=\mb{V}(x_0x_3-x_1x_2)$. We will intersect $AL_{3,5}$ with $Q$ to obtain two lines in the ruling $N_t=\{\pcoor{t:1:bt:b}\}$. Substituting $x_0=-u_1x_1-u_2x_2-u_3x_3$ and $x_1=-v_2x_2-v_3x_3$ in the defining equation for $Q$, we find that
\[Q\cap AL_{3,5}=\mathbb{V}(v_2x_2^2+(u_1v_2-u_2+v_3)x_2x_3+(u_1v_3-u_3)x_3^2).\]

The points of $Q\cap AL_{3,5}$ are determined by the ratio $\frac{x_2}{x_3}$, so it suffices to solve the quadratic equation
\begin{align}\label{eq:quadratic}
v_2(\tfrac{x_2}{x_3})^2+(u_1v_2-u_2+v_3)\tfrac{x_2}{x_3}+(u_1v_3-u_3)=0.
\end{align}

By B\'ezout's Theorem, $Q\cap AL_{3,5}$ consists of two points. There are thus two distinct solutions to Equation~\ref{eq:quadratic}. In particular, $(u_1v_2-u_2+v_3)^2\neq 4v_2(u_1v_3-u_3)$. If $v_2\neq 0$, then we can use the quadratic formula to solve this equation.

\begin{rem}\label{rem:distinct}
If $Q\cap AL_{3,5}$ consisted of fewer than two points, then there would be at most one line meeting $E_1,E_2,E_3,L_{3,5}$, which would contradict the overall count of 27 distinct lines on $S$.
\end{rem}

\begin{notn}\label{notn:s_i}
Let
\[s_1=\frac{-(u_1v_2-u_2+v_3)+\sqrt{(u_1v_2-u_2+v_3)^2-4v_2(u_1v_3-u_3)}}{2v_2}\]
and
\[s_2=\frac{-(u_1v_2-u_2+v_3)-\sqrt{(u_1v_2-u_2+v_3)^2-4v_2(u_1v_3-u_3)}}{2v_2}\]
be the solutions of $v_2(\tfrac{x_2}{x_3})^2+(u_1v_2-u_2+v_3)\tfrac{x_2}{x_3}+(u_1v_3-u_3)=0$. If $(u_1v_2-u_2+v_3)^2-4v_2(u_1v_3-u_3)=re^{i\theta}$ with $r\geq 0$ and $0\leq\theta<2\pi$ is a complex number, then we denote $\sqrt{re^{i\theta}}=\sqrt{r}e^{i\theta/2}$ and $-\sqrt{re^{i\theta}}=-\sqrt{r}e^{i\theta/2}$.
\end{notn}

\begin{prop}\label{prop:c_3 and l_12}
We have the equations $C_3=\mathbb{V}(x_0+(-s_1c_1-t_4)x_1,(1+s_1c_2)x_2+(s_1c_3-t_4)x_3)$ and $L_{1,2}=\mathbb{V}(x_0+(-s_2c_1-t_4)x_1,(1+s_2c_2)x_2+(s_2c_3-t_4)x_3)$.
\end{prop}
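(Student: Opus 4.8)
The plan is to exploit the reduction already set up in this section: after the change of coordinates $A$, the two lines we seek become the two transversals $AC_3$ and $AL_{1,2}$ to the four skew lines $E_1,E_2,E_3,AL_{3,5}$, and these transversals are forced to lie in the ruling $N_t$ of $Q$. First I would invoke the classical fact that any line meeting three pairwise skew lines lying on a quadric must itself lie on that quadric. Since $E_1,E_2,E_3$ lie in the ruling $M_s$ of $Q$, any transversal to all three lies in the complementary ruling $N_t=\{\pcoor{t:1:bt:b}\}$. Thus $AC_3$ and $AL_{1,2}$ are each of the form $N_t$ for some parameter $t$.

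The key identification is between the roots $s_1,s_2$ of Equation~\ref{eq:quadratic} and the ruling parameters of these two lines. On $N_t$, every point satisfies $x_2/x_3 = bt/b = t$, so the parameter of a ruling line equals the ratio $x_2/x_3$ of any of its points. Now $AC_3$ meets $AL_{3,5}$ in a point $P$; since $AC_3\subset Q$, we have $P\in Q\cap AL_{3,5}$, and since $AC_3=N_t$, its parameter is $t = (x_2/x_3)(P)$. Because $Q\cap AL_{3,5}$ consists of exactly the two points whose ratios $x_2/x_3$ are the roots $s_1,s_2$, the lines $AC_3,AL_{1,2}$ must be $N_{s_1}$ and $N_{s_2}$ in some order; I would fix the labeling $AC_3 = N_{s_1}$, $AL_{1,2}=N_{s_2}$ by convention. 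Writing each line in ideal form gives $AC_3 = \mb{V}(x_0 - s_1 x_1,\, x_2 - s_1 x_3)$ and $AL_{1,2} = \mb{V}(x_0 - s_2 x_1,\, x_2 - s_2 x_3)$.

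It then remains to undo the change of coordinates, i.e.\ to compute $C_3 = A^{-1}(AC_3)$ and $L_{1,2}=A^{-1}(AL_{1,2})$. Here I would apply the rule recorded in Section~\ref{sec:notation}: applying $A^{-1}$ to a variety replaces each defining linear form $\ell$ by $\ell\circ A$, and the coefficient vector of $\ell\circ A$ is $A^T$ times the coefficient vector of $\ell$. Applying $A^T$ to $(1,-s_1,0,0)^T$ yields the form $x_0 + (-s_1c_1 - t_4)x_1$, and applying it to $(0,0,1,-s_1)^T$ yields $(1+s_1c_2)x_2 + (s_1c_3 - t_4)x_3$; these are precisely the stated generators of $C_3$, and the identical computation with $s_2$ in place of $s_1$ produces $L_{1,2}$. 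This last step is pure bookkeeping — two matrix–vector products per line — so I expect no genuine difficulty there.

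The main conceptual obstacle is the middle step: justifying that the abstract transversals $AC_3,AL_{1,2}$ really are the ruling lines $N_{s_1},N_{s_2}$ read off from Equation~\ref{eq:quadratic}. Everything hinges on (i) the quadric-transversal fact placing the transversals in the $N_t$ ruling, and (ii) the parametrization identity $t = x_2/x_3$ that converts each point of $Q\cap AL_{3,5}$ into a ruling parameter. Once these two points are in hand, the identification of the parameters with $s_1,s_2$ is immediate from Notation~\ref{notn:s_i}, and the remainder of the argument is the routine application of $A^{-1}$.
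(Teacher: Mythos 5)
Your proposal is correct and follows essentially the same route as the paper's proof: identify each transversal with the ruling line $N_t$ whose parameter $t=x_2/x_3$ is read off from a point of $Q\cap AL_{3,5}$, label the two parameters $s_1,s_2$ by convention, and pull back through $A^{-1}$ using the coefficient rule from Section~\ref{sec:notation} (your matrix--vector products with $A^T$ check out against the stated generators). The only difference is that you spell out inside the proof the quadric-transversal fact and the reduction via $A$, which the paper places in the discussion preceding the proposition rather than in the proof itself.
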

\begin{proof}
Note that a line $\{\pcoor{t:1:bt:b}:b\in\mb{C}\}$ of the ruling $N_t$ is determined by the ratio $\frac{x_2}{x_3}=\frac{bt}{b}=t$. That is, the line $N_{s_i}=\mb{V}(x_0-s_i x_1,x_2-s_i x_3)$ contains the point of $Q\cap AL_{3,5}$ corresponding to $\frac{x_2}{x_3}=s_i$. Without loss of generality, we may denote $AC_3=\mb{V}(x_0-s_1x_1,x_2-s_1x_3)$ and $AL_{1,2}=\mb{V}(x_0-s_2x_1,x_2-s_2x_3)$. The proof is then completed by applying $A^{-1}AC_3=C_3$ and $A^{-1}AL_{1,2}=L_{1,2}$.
\end{proof}

\begin{rem}\label{rem:s_2=infty}
If $v_2=0$, then Equation~\ref{eq:quadratic} only has one root, say $s_1$. The other solution to this equation comes from $\frac{x_3}{x_2}=0$, which corresponds to $s_2=\infty$. Since $N_\infty=\mb{V}(x_1,x_3)$, we have $L_{1,2}=\mb{V}(c_1x_1,-c_2x_2-c_3x_3)=\mb{V}(x_1,c_2x_2+c_3x_3)$. This agrees with the formula $L_{1,2}=\mathbb{V}(x_0+(-s_2c_1-t_4)x_1,(1+s_2c_2)x_2+(s_2c_3-t_4)x_3)$ by dividing all terms by $s_2=\infty$.
\end{rem}

\section{Four lines as residual lines}\label{sec:step4}
Given our original three skew lines, along with the other fourteen lines that we have found, the remaining ten lines are residually determined. That is, given two lines $\Lambda_1,\Lambda_2$ in $S$, the intersection of $S$ with the plane $H$ containing $\Lambda_1$ and $\Lambda_2$ is a third line contained in $S$. The intersection $S\cap H$ is given by the vanishing of the product of three linear homogeneous polynomials; two of these factors correspond to $\Lambda_1$ and $\Lambda_2$, and the third factor corresponds to the desired line. We will frequently use projective changes of coordinates to simplify these computations. However, we only use this approach to find four of the remaining ten lines. Finding the lines $E_4,E_5,E_6,L_{4,5},L_{4,6}$, and $L_{5,6}$ proved to be difficult, so we give a different approach in Section~\ref{sec:step5}. We will use the fact that $E_i$ is residual to $C_j$ and $L_{i,j}$ if $i\neq j$ \cite[p. 719]{Har79}.

\subsection{$C_2$ and $L_{1,3}$}\label{sec:c2 and l13}
We will solve for $L_{1,3}$ and $C_2$ by applying the following discussion for $L=C_3$ and $L=L_{1,2}$, respectively. The plane containing $E_1$ and $L:=\mathbb{V}(x_0+ax_1,bx_2+cx_3)$ is $H=\mathbb{V}(x_0+ax_1)$. To obtain the third line, say $\Lambda$, contained in $S\cap H$, we factor $f(-ax_1,x_1,x_2,x_3)=x_1(bx_2+cx_3)(mx_1+nx_2+px_3)$. Simplifying, we find the following equations:
\begin{align*}
    bm&=a^2\alpha_{2,0,1,0}-a\alpha_{1,1,1,0}+\alpha_{0,2,1,0},\\
    cm&=a^2\alpha_{2,0,0,1}-a\alpha_{1,1,0,1}+\alpha_{0,2,0,1},\\
    bn&=-a\alpha_{1,0,2,0}+\alpha_{0,1,2,0},\\
    cp&=-a\alpha_{1,0,0,2}+\alpha_{0,1,0,2},\\
    bp+cn&=-a\alpha_{1,0,1,1}+\alpha_{0,1,1,1}.
\end{align*}
Since $L$ is a line, we note that $(b,c)\neq(0,0)$, so $|b|^2+|c|^2>0$. Thus
\begin{align}\label{notn:m}
m=\frac{\bar{b}(a^2\alpha_{2,0,1,0}-a\alpha_{1,1,1,0}+\alpha_{0,2,1,0})+\bar{c}(a^2\alpha_{2,0,0,1}-a\alpha_{1,1,0,1}+\alpha_{0,2,0,1})}{|b|^2+|c|^2}.
\end{align}

Next, since $|b|^4+|c|^2>0$ and $|b|^2+|c|^4>0$, we use the expressions
\begin{align*}
c^2n&=c(bp+cn)-b(cp)\\
&=c(-a\alpha_{1,0,1,1}+\alpha_{0,1,1,1})-b(-a\alpha_{1,0,0,2}+\alpha_{0,1,0,2})
\end{align*}
and
\begin{align*}
b^2p&=b(bp+cn)-c(bn)\\
&=b(-a\alpha_{1,0,1,1}+\alpha_{0,1,1,1})-c(-a\alpha_{1,0,2,0}+\alpha_{0,1,2,0})
\end{align*}
to solve for $n$ and $p$. This yields
\begin{align}\label{notn:n}
n=\frac{\bar{c}^2(c(-a\alpha_{1,0,1,1}+\alpha_{0,1,1,1})-b(-a\alpha_{1,0,0,2}+\alpha_{0,1,0,2}))+\bar{b}(-a\alpha_{1,0,2,0}+\alpha_{0,1,2,0})}{|b|^2+|c|^4}
\end{align}
and
\begin{align}\label{notn:p}
p=\frac{\bar{b}^2(b(-a\alpha_{1,0,1,1}+\alpha_{0,1,1,1})-c(-a\alpha_{1,0,2,0}+\alpha_{0,1,2,0}))+\bar{c}(-a\alpha_{1,0,0,2}+\alpha_{0,1,0,2})}{|b|^4+|c|^2}.
\end{align}

\begin{rem}\label{rem:first}
It follows that the residual line $\Lambda$ in the plane $H$ is given by $\mathbb{V}(x_0+ax_1,mx_1+nx_2+px_3)$, where $m,n,p$ are as above.
\end{rem}

\begin{notn}\label{notn:m_i,n_i,p_i}
Thinking of $m,n,p$ (Equations~\ref{notn:m}, \ref{notn:n}, and \ref{notn:p}) as functions of $a,b,c$, let $(m_1,n_1,p_1)=(m,n,p)(-s_1c_1-t_4,1+s_1c_2,s_1c_3-t_4)$. Likewise, let $(m_2,n_2,p_2)=(m,n,p)(-s_2c_1-t_4,1+s_2c_2,s_2c_3-t_4)$.
\end{notn}

\begin{prop}
We have the equations $L_{1,3}=\mathbb{V}(x_0+(-s_1c_1-t_4)x_1,m_1x_1+n_1x_2+p_1x_3)$ and $C_2=\mathbb{V}(x_0+(-s_2c_1-t_4)x_1,m_2x_1+n_2x_2+p_2x_3)$.
\end{prop}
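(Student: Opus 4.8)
The plan is to realize both $L_{1,3}$ and $C_2$ as residual lines obtained from $E_1$ together with one of the lines already computed in Proposition~\ref{prop:c_3 and l_12}, and then to read off their equations directly from the general formula recorded in Remark~\ref{rem:first}. The key structural input is the incidence data of the $27$ lines recalled in Section~\ref{sec:notation}: the triples $\{E_1,C_3,L_{1,3}\}$ and $\{E_1,C_2,L_{1,2}\}$ each span a tritangent plane, which is the relation ``$E_i$ is residual to $C_j$ and $L_{i,j}$ for $i\neq j$'' cited from Harris. Consequently $L_{1,3}$ is the third line cut out of $S$ by the plane through $E_1$ and $C_3$, and $C_2$ is the third line cut out of $S$ by the plane through $E_1$ and $L_{1,2}$.

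First I would treat $L_{1,3}$. By Proposition~\ref{prop:c_3 and l_12} we have $C_3=\mathbb{V}(x_0+(-s_1c_1-t_4)x_1,(1+s_1c_2)x_2+(s_1c_3-t_4)x_3)$, which is exactly of the shape $\mathbb{V}(x_0+ax_1,bx_2+cx_3)$ studied in Section~\ref{sec:c2 and l13}, with $a=-s_1c_1-t_4$, $b=1+s_1c_2$, and $c=s_1c_3-t_4$. Since $C_3$ is a genuine line, $(b,c)\neq(0,0)$, so the denominators $|b|^2+|c|^2$, $|b|^2+|c|^4$, and $|b|^4+|c|^2$ appearing in Equations~\ref{notn:m}, \ref{notn:n}, and \ref{notn:p} are strictly positive and the formulas for $m,n,p$ are valid. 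Remark~\ref{rem:first} then identifies the residual line in the plane $\mathbb{V}(x_0+ax_1)$ as $\mathbb{V}(x_0+ax_1,mx_1+nx_2+px_3)$; substituting the above values of $a,b,c$ and invoking the abbreviation of Notation~\ref{notn:m_i,n_i,p_i} gives $L_{1,3}=\mathbb{V}(x_0+(-s_1c_1-t_4)x_1,m_1x_1+n_1x_2+p_1x_3)$.

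The argument for $C_2$ is identical after replacing $C_3$ by $L_{1,2}$. Proposition~\ref{prop:c_3 and l_12} yields $L_{1,2}=\mathbb{V}(x_0+(-s_2c_1-t_4)x_1,(1+s_2c_2)x_2+(s_2c_3-t_4)x_3)$, so now $a=-s_2c_1-t_4$, $b=1+s_2c_2$, and $c=s_2c_3-t_4$. Applying Remark~\ref{rem:first} and Notation~\ref{notn:m_i,n_i,p_i} in the same way produces $C_2=\mathbb{V}(x_0+(-s_2c_1-t_4)x_1,m_2x_1+n_2x_2+p_2x_3)$, as claimed.

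I do not expect a genuine computational obstacle here, since the analytic content was already isolated in Section~\ref{sec:c2 and l13}; the proof reduces to correctly matching the triple $(a,b,c)$ to the two input lines. The one point requiring care is the incidence bookkeeping, namely verifying that it is $C_3$ (rather than $L_{1,2}$) that pairs with $E_1$ to produce $L_{1,3}$, and symmetrically that $L_{1,2}$ pairs with $E_1$ to produce $C_2$. This is forced by the ``without loss of generality'' choice of labels made in the proof of Proposition~\ref{prop:c_3 and l_12} together with the tritangent-plane relations $\{E_1,C_3,L_{1,3}\}$ and $\{E_1,C_2,L_{1,2}\}$, so once those labels are fixed the assignment of residual lines is unambiguous.
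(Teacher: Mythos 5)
Your proposal is correct and follows essentially the same route as the paper: specialize the triple $(a,b,c)$ from Section~\ref{sec:c2 and l13} to the defining data of $C_3$ and $L_{1,2}$ from Proposition~\ref{prop:c_3 and l_12}, identify the residual lines in the planes through $E_1$ as $L_{1,3}$ and $C_2$ via the tritangent relations, and read off the equations from Remark~\ref{rem:first} with Notation~\ref{notn:m_i,n_i,p_i}. The additional remarks on the positivity of the denominators and on the labeling conventions are consistent with, and slightly more explicit than, the paper's own argument.
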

\begin{proof}
If $(a,b,c)=(-s_1c_1-t_4,1+s_1c_2,s_1c_3-t_4)$, then $L:=\mb{V}(x_0+ax_1,bx_2+cx_3)=C_3$ and hence the residual line is $\Lambda=L_{1,3}$. If $(a,b,c)=(-s_2c_1-t_4,1+s_2c_2,s_2c_3-t_4)$, then $L:=\mb{V}(x_0+ax_1,bx_2+cx_3)=L_{1,2}$ and hence the residual line is $\Lambda=C_2$. Remark~\ref{rem:first} then gives us the desired equations.
\end{proof}

\subsection{$C_1$ and $L_{2,3}$}
We now apply the approach of Section~\ref{sec:c2 and l13} for $L=C_3$ and $L=L_{1,2}$ to solve for $L_{2,3}$ and $C_1$, respectively. We will give a projective change of coordinates $B$ that fixes $E_2$ and takes $L:=\mb{V}(x_0+ax_1,bx_2+cx_3)$ to $BL=\mb{V}(x_0+ax_1,x_2)$. Intersecting the cubic surface $BS=\mb{V}(f\circ B^{-1})$ with the plane $H$ containing $E_2$ and $BL$, we will be able to solve for the third line $\Lambda$ contained in $BS\cap H$. We then obtain the desired line, namely $C_1$ or $L_{2,3}$, as the line $B^{-1}\Lambda$. Let
\begin{align*}
    (B^{-1})^T=\begin{pmatrix}1&0&0&0\\0&1&0&0\\0&0&\frac{\bar{b}}{|b|^2+|c|^2}&\frac{\bar{c}}{|b|^2+|c|^2}\\0&0&c&-b\end{pmatrix}.
\end{align*}

Note that $L$ is a line, so $(b,c)\neq(0,0)$. Since $(b,c)\neq(0,0)$, it follows that $B$ is well-defined and moreover $\det{B}=-1$. We have that $BE_2=E_2$ and $BL=\mb{V}(x_0+ax_1,x_2+(bc-bc)x_3)=\mb{V}(x_0+ax_1,x_2)$. The plane $H=\mb{V}(x_2)$ contains both $E_2$ and $BL$. The intersection $BS\cap H$ is given by the vanishing of
\begin{align*}
    f\circ B^{-1}|_{x_2=0}&=f(x_0,x_1,cx_3,-bx_3)\\
    &=x_3(x_0+ax_1)(h x_0+j x_1+k x_3).
\end{align*}
Evaluating $f\circ B^{-1}|_{x_2=0}$, we obtain the following relations:
\begin{align}
    h&=\alpha_{2,0,1,0}c-\alpha_{2,0,0,1}b,\label{notn:h}\\
    j+ah&=\alpha_{1,1,1,0}c-\alpha_{1,1,0,1}b,\nonumber\\
    k&=\alpha_{1,0,2,0}c^2-\alpha_{1,0,1,1}bc+\alpha_{1,0,0,2}b^2.\label{notn:k}
\end{align}

Subtracting $ah$ from $j+ah$, we have
\begin{align}\label{notn:j}
j=\alpha_{1,1,1,0}c-\alpha_{1,1,0,1}b-a\left(\alpha_{2,0,1,0}c-\alpha_{2,0,0,1}b\right).
\end{align}

\begin{rem}\label{rem:second}
It follows that $BS\cap H$ contains the lines $E_2,BL$, and $\Lambda=\mb{V}(hx_0+jx_1+kx_3,x_2)$. Applying $B^{-1}$, we have
\[B^{-1}\Lambda=\mb{V}(hx_0+jx_1+\tfrac{\bar{c}k}{|b|^2+|c|^2}x_2-\tfrac{\bar{b}k}{|b|^2+|c|^2}x_3,bx_2+cx_3).\]
\end{rem}

\begin{notn}\label{notn:h_i,j_i,k_i}
Thinking of $h,j,k$ (Equations~\ref{notn:h}, \ref{notn:j}, and \ref{notn:k}) as functions of $a,b,c$, let $(h_1,j_1,k_1)=(h,j,k)(-s_1c_1-t_4,1+s_1c_2,s_1c_3-t_4)$. Likewise, let $(h_2,j_2,k_2)=(h,j,k)(-s_2c_1-t_4,1+s_2c_2,s_2c_3-t_4)$.
\end{notn}

\begin{prop}
We have the equations
\begin{align*}
L_{2,3}=\mathbb{V}\big(&h_1x_0+j_1x_1+\tfrac{(\overline{s_1c_3-t_4})k_1}{|1+s_1c_2|^2+|s_1c_3-t_4|^2}x_2-\tfrac{(\overline{1+s_1c_2})k_1}{|1+s_1c_2|^2+|s_1c_3-t_4|^2}x_3,\\
&(1+s_1c_2)x_2+(s_1c_3-t_4)x_3\big)
\end{align*}
and
\begin{align*}
C_1=\mathbb{V}\big(&h_2x_0+j_2x_1+\tfrac{(\overline{s_2c_3-t_4})k_2}{|1+s_2c_2|^2+|s_2c_3-t_4|^2}x_2-\tfrac{(\overline{1+s_2c_2})k_2}{|1+s_2c_2|^2+|s_2c_3-t_4|^2}x_3,\\
&(1+s_2c_2)x_2+(s_2c_3-t_4)x_3\big).
\end{align*}
\end{prop}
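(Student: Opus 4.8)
The plan is to reduce the proposition to the general residual-line computation already set up in this subsection, and then to specialize the parameters $(a,b,c)$ to the two coplanar configurations $\{E_2,C_3\}$ and $\{E_2,L_{1,2}\}$. Concretely, I will run the displayed construction twice — once with $L=C_3$ and once with $L=L_{1,2}$ — and show that the two residual lines it produces are exactly $L_{2,3}$ and $C_1$.

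First I would pin down which residual lines arise. Using Harris's fact that $E_i$ is residual to $C_j$ and $L_{i,j}$ whenever $i\neq j$ \cite[p. 719]{Har79}, the tritangent plane spanned by $E_2$ and $C_3$ meets $S$ in $E_2,C_3,L_{2,3}$, so taking $L=C_3$ yields $\Lambda=L_{2,3}$; likewise the tritangent plane spanned by $E_2$ and $L_{1,2}$ meets $S$ in $E_2,C_1,L_{1,2}$, so taking $L=L_{1,2}$ yields $\Lambda=C_1$. Coplanarity in each case is what legitimizes the construction, and it holds because $E_2$ meets $C_3$ (as $2\neq 3$) and $E_2$ meets $L_{1,2}$.

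Next I would record that, by Proposition~\ref{prop:c_3 and l_12}, both $C_3$ and $L_{1,2}$ already have the normal form $\mb{V}(x_0+ax_1,bx_2+cx_3)$ required by the change of coordinates $B$, with $(a,b,c)=(-s_1c_1-t_4,\,1+s_1c_2,\,s_1c_3-t_4)$ for $C_3$ and $(a,b,c)=(-s_2c_1-t_4,\,1+s_2c_2,\,s_2c_3-t_4)$ for $L_{1,2}$; since these are genuine lines, $(b,c)\neq(0,0)$ in each case, so $B$ is well defined (with $\det B=-1$) and fixes $E_2$ while sending $L$ to $\mb{V}(x_0+ax_1,x_2)$. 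I would then read off the residual line in $B$-coordinates from the factorization $f\circ B^{-1}|_{x_2=0}=x_3(x_0+ax_1)(hx_0+jx_1+kx_3)$: the factor $x_3$ together with $x_2=0$ is $E_2$, the factor $x_0+ax_1$ is $BL$, and the third factor gives $\Lambda=\mb{V}(hx_0+jx_1+kx_3,x_2)$ with $h,j,k$ as in Equations~\ref{notn:h}, \ref{notn:j}, and \ref{notn:k}. Applying $B^{-1}$ as in Remark~\ref{rem:second} returns the line in the original coordinates, and substituting the two parameter triples above — equivalently setting $(h,j,k)=(h_1,j_1,k_1)$, resp. $(h_2,j_2,k_2)$, per Notation~\ref{notn:h_i,j_i,k_i} together with $b=1+s_ic_2$ and $c=s_ic_3-t_4$ — produces precisely the claimed formulas, with the denominators $|1+s_ic_2|^2+|s_ic_3-t_4|^2$ and the conjugated coefficients coming directly from the entries of $(B^{-1})^T$.

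Since the algebraic heavy lifting (the factorization giving $h,j,k$ and the explicit form of $B^{-1}\Lambda$) is already contained in the surrounding discussion, the proof is essentially an assembly-and-specialization argument. The one genuinely content-bearing step, and the place I would be most careful, is the combinatorial identification of the residual lines: I must be certain that the plane through $E_2$ and $C_3$ cuts out $L_{2,3}$ (rather than some other $L_{2,k}$) and that the plane through $E_2$ and $L_{1,2}$ cuts out $C_1$, which is exactly what the tritangent-plane incidences guarantee. The rest is bookkeeping — checking $(b,c)\neq(0,0)$ for each specialization and tracking the substitutions cleanly.
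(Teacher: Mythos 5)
Your proposal is correct and follows essentially the same route as the paper: identify the residual lines via the incidence fact that $E_2$, $C_3$, $L_{2,3}$ (resp.\ $E_2$, $L_{1,2}$, $C_1$) are coplanar, then specialize $(a,b,c)$ in the general computation of this subsection to the coordinates of $C_3$ and $L_{1,2}$ and read off the equations from the factorization and the change of coordinates $B^{-1}$. The paper's own proof is just a terser version of this same assembly-and-specialization argument.
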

\begin{proof}
If $(a,b,c)=(-s_1c_1-t_4,1+s_1c_2,s_1c_3-t_4)$, then $L:=\mb{V}(x_0+ax_1,bx_2+cx_3)=C_3$ and hence the residual line is $\Lambda=BL_{2,3}$. If $(a,b,c)=(-s_2c_1-t_4,1+s_2c_2,s_2c_3-t_4)$, then $L:=\mb{V}(x_0+ax_1,bx_2+cx_3)=L_{1,2}$ and hence the residual line is $\Lambda=BC_1$. Remark~\ref{rem:second} then gives us the desired equations.
\end{proof}

\begin{rem}
If $s_2=\infty$, we can again obtain the correct lines from the above formulas by dividing all terms by $s_2=\infty$ as discussed in Remark~\ref{rem:s_2=infty}.
\end{rem}

\section{The final six lines}\label{sec:step5}
We now want to solve for $E_4,E_5,E_6,L_{4,5},L_{4,6}$, and $L_{5,6}$. For $i,j,k$ distinct elements of $\{4,5,6\}$, we note that $L_{i,j}$ and $E_k$ are the two lines passing through the four skew lines $C_i$, $C_j$, $L_{1,k}$, and $L_{2,k}$. We will use the same methods as in Section~\ref{sec:step3} to solve for these lines. We first give two projective changes of coordinates. Let
\begin{align*}
(A_{i,j}^{-1})^T=\begin{pmatrix}t_j & 1 & 0 & 0\\ t_i & 1 & 0 & 0\\ 0 & 0& t_j & 1\\ 0 & 0 & t_i & 1\end{pmatrix}\quad\text{ and }\quad (B_{i,j,k}^{-1})^T=\begin{pmatrix}1 & 0 & 0 & 0\\ 0 & -\tfrac{t_j-t_k}{t_i-t_k} & 0 & \tfrac{\gamma(t_j-t_k)}{\eps(t_i-t_k)}\\ 0 & 0 &\tfrac{1}{\delta} & 0\\ 0 & 0 & 0 & -\tfrac{1}{\eps}\end{pmatrix},
\end{align*}
where
\begin{align}\label{notn:a,b,c}
\gamma&=(1-\tfrac{t_i-t_k}{t_j-t_k})(t_k^2\alpha_{2,0,1,0}+t_k\alpha_{1,1,1,0}+\alpha_{0,2,1,0}),\\
\delta&=t_k^2\alpha_{1,0,2,0}+t_k(t_j\alpha_{1,0,2,0}+\alpha_{0,1,2,0}+\alpha_{1,0,1,1})+(t_j\alpha_{0,1,2,0}+\alpha_{0,1,1,1}),\nonumber \\
\eps&=t_k^2\alpha_{1,0,2,0}+t_k(t_i\alpha_{1,0,2,0}+\alpha_{0,1,2,0}+\alpha_{1,0,1,1})+(t_i\alpha_{0,1,2,0}+\alpha_{0,1,1,1}).\nonumber
\end{align}

Since $t_i\neq t_j$ (as noted in the proof of Proposition~\ref{prop:c_i}), we have that $A_{i,j}$ is non-singular. As a result, the fact that $C_i,C_j$, and $L_{1,k}$ are skew implies that $A_{i,j}C_i,A_{i,j}C_j$, and $A_{i,j}L_{1,k}$ are skew. Moreover, we have that $A_{i,j}C_i=\mb{V}(x_0,x_2)$ and $A_{i,j}C_j=\mb{V}(x_1,x_3)$. We also have
\begin{align*}
A_{i,j}L_{1,k}&=\mb{V}((t_j-t_k)x_0+(t_i-t_k)x_1,(t_k^2\alpha_{2,0,1,0}+t_k\alpha_{1,1,1,0}+\alpha_{0,2,1,0})(x_0+x_1)+\delta x_2+\eps x_3)\\
&=\mb{V}(x_0+\tfrac{t_i-t_k}{t_j-t_k}x_1,\gamma x_1+\delta x_2+\eps x_3).
\end{align*}

As mentioned in the proof of Proposition~\ref{prop:c_i}, we have that $t_i\neq t_j\neq t_k$, so $\frac{t_i-t_k}{t_j-t_k}$ is a complex number not equal to 0 or 1. Also note that if $\delta=0$, then $AC_j$ and $A_{i,j}L_{1,k}$ intersect at $\pcoor{0:0:1:0}$, contradicting the fact that they are skew. Similarly, if $\eps=0$, then $A_{i,j}C_i$ and $A_{i,j}L_{1,k}$ intersect at $\pcoor{0:0:0:1}$, again contradicting our skew assumption. We thus have that $\delta\neq 0$ and $\eps\neq 0$, so the change of coordinates given by $B_{i,j,k}$ is well-defined and non-singular. Now we have
\begin{align*}
B_{i,j,k}A_{i,j}C_i&=\mb{V}(x_0,x_2),\\
B_{i,j,k}A_{i,j}C_j&=\mb{V}(x_1,x_3),\\
B_{i,j,k}A_{i,j}L_{1,k}&=\mb{V}(x_0-x_1,x_2-x_3).
\end{align*}
These three skew lines lie on the ruling $N_t=\{\pcoor{t:1:bt:b}\}$ of the quadric surface $Q=\mb{V}(x_0x_3-x_1x_2)$. In particular, we have $N_0=B_{i,j,k}A_{i,j}C_i$, $N_\infty=B_{i,j,k}A_{i,j}C_j$, and $N_1=B_{i,j,k}A_{i,j}L_{1,k}$. Next, we will intersect $B_{i,j,k}A_{i,j}L_{2,k}$ with $Q$. By B\'ezout's Theorem, this intersection will consist of two points (which are distinct by the same reasoning outlined in Remark~\ref{rem:distinct}). The lines in the ruling $M_s=\{\pcoor{s:as:1:a}\}$ passing through these two points will be $B_{i,j,k}A_{i,j}L_{i,j}$ and $B_{i,j,k}A_{i,j}E_k$.
We have that
\begin{align*}
B_{i,j,k}A_{i,j}L_{2,k}&=B_{i,j,k}\mb{V}((t_j-t_k)x_2+(t_i-t_k)x_3,\pi x_0+\rho x_1+\sigma x_2+\sigma x_3)\\
&=\mb{V}(\tfrac{\gamma(t_j-t_k)}{\eps}x_1+\tfrac{t_j-t_k}{\delta}x_2-\tfrac{t_i-t_k}{\eps}x_3,\pi x_0+(-\tfrac{\rho(t_j-t_k)}{t_i-t_k}+\tfrac{\sigma \gamma(t_j-t_k)}{\eps(t_i-t_k)})x_1+\tfrac{\sigma}{\delta}x_2-\tfrac{\sigma}{\eps}x_3)\\
&=\mb{V}(\tfrac{\gamma\delta}{\eps}x_1+x_2-\tfrac{\delta(t_i-t_k)}{\eps(t_j-t_k)}x_3,\pi x_0+\tfrac{\sigma\gamma(t_j-t_i)-\rho\eps(t_j-t_k)}{\eps(t_i-t_k)}x_1+\tfrac{\sigma}{\eps}(\tfrac{t_i-t_j}{t_j-t_k})x_3).
\end{align*}
where
\begin{align}\label{notn:pi,rho,sigma}
\pi&=t_k^2\alpha_{2,0,1,0}+t_k(t_j\alpha_{2,0,1,0}+\alpha_{2,0,0,1}+\alpha_{1,1,1,0})+(t_j\alpha_{2,0,0,1}+\alpha_{1,1,0,1}),\\
\rho&=t_k^2\alpha_{2,0,1,0}+t_k(t_i\alpha_{2,0,1,0}+\alpha_{2,0,0,1}+\alpha_{1,1,1,0})+(t_i\alpha_{2,0,0,1}+\alpha_{1,1,0,1}),\nonumber \\
\sigma&=t_k^2\alpha_{1,0,2,0}+t_k\alpha_{1,0,1,1}+\alpha_{1,0,0,2}.\nonumber
\end{align}

\begin{prop}
We have that $\pi\neq 0$.
\end{prop}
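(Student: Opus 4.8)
The plan is to argue by contradiction in exactly the same style as the proof that $d_1\neq 0$. The computation immediately preceding the statement records
\[A_{i,j}L_{2,k}=\mb{V}((t_j-t_k)x_2+(t_i-t_k)x_3,\ \pi x_0+\rho x_1+\sigma x_2+\sigma x_3),\]
so $\pi$ is precisely the coefficient of $x_0$ in the second defining form of $A_{i,j}L_{2,k}$. Vanishing of $\pi$ should therefore translate into an unwanted incidence, and the whole proof amounts to locating the offending point.

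First I would suppose $\pi=0$ and test the point $\pcoor{1:0:0:0}$ against the two equations cutting out $A_{i,j}L_{2,k}$. The first form, $(t_j-t_k)x_2+(t_i-t_k)x_3$, involves only $x_2,x_3$ and so vanishes at $\pcoor{1:0:0:0}$; the second form evaluates to $\pi$, which vanishes by assumption. Hence $\pcoor{1:0:0:0}\in A_{i,j}L_{2,k}$. On the other hand, the same point lies in $\mb{V}(x_1,x_3)=A_{i,j}C_j$, as recorded earlier in this section.

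Finally, since $A_{i,j}$ is non-singular, the common point $A_{i,j}^{-1}\pcoor{1:0:0:0}$ would witness an intersection of $C_j$ and $L_{2,k}$. But $C_j$ and $L_{2,k}$ are two of the four skew lines $C_i,C_j,L_{1,k},L_{2,k}$ fixed at the start of this section (equivalently, $j\notin\{2,k\}$, so these two lines are disjoint by the incidence pattern of the $27$ lines), and hence cannot meet. This contradiction forces $\pi\neq 0$. I expect no genuine obstacle here; the only point to verify carefully is the skewness of $C_j$ and $L_{2,k}$, which is the sole geometric input and is already built into the framing of Section~\ref{sec:step5}.
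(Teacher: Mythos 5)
Your proof is correct and is essentially identical to the paper's: both assume $\pi=0$, observe that the point $\pcoor{1:0:0:0}$ then lies on both $A_{i,j}L_{2,k}$ and $A_{i,j}C_j=\mb{V}(x_1,x_3)$, and derive a contradiction with the skewness of $C_j$ and $L_{2,k}$. Your added remark justifying that skewness via the four skew lines fixed at the start of Section~\ref{sec:step5} is a harmless elaboration of what the paper leaves implicit.
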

\begin{proof}
If $\pi=0$, then $A_{i,j}L_{2,k}=\mb{V}((t_j-t_k)x_2+(t_i-t_k)x_3,\rho x_1+\sigma x_2+\sigma x_3)$. Note that $A_{i,j}C_j=\mb{V}((t_i-t_j)x_1,(t_i-t_j)x_3)=\mb{V}(x_1,x_3)$. Thus the point $\pcoor{1:0:0:0}$ is contained in both $A_{i,j}L_{2,k}$ and $A_{i,j}C_j$, so these lines are not skew. However, this contradicts the fact that $L_{2,k}$ and $C_j$ are skew, so we conclude that $\pi\neq 0$.
\end{proof}

We compute the intersection $Q\cap B_{i,j,k}A_{i,j}L_{2,k}$ by substituting $x_2=-\tfrac{\gamma\delta}{\eps}x_1+\tfrac{\delta(t_i-t_k)}{\eps(t_j-t_k)}x_3$ and $x_0=\tfrac{\sigma\gamma(t_i-t_j)+\rho\eps(t_j-t_k)}{\pi\eps(t_i-t_k)}x_1-\tfrac{\sigma}{\pi\eps}(\tfrac{t_i-t_j}{t_j-t_k})x_3$ into the defining equation for $Q$. We thus have $Q\cap B_{i,j,k}A_{i,j}L_{2,k}=\mb{V}(\tfrac{\gamma\delta}{\eps}x_1^2+(\tfrac{\sigma\gamma(t_i-t_j)+\rho\eps(t_j-t_k)}{\pi\eps(t_i-t_k)}-\tfrac{\delta}{\eps}(\tfrac{t_i-t_k}{t_j-t_k}))x_1x_3-\tfrac{\sigma}{\pi\eps}(\tfrac{t_i-t_j}{t_j-t_k})x_3^2)$. Lines in the ruling $M_s$ are determined by the ratio $\frac{x_1}{x_3}$, so it suffices to solve the quadratic equation $\gamma\delta(\tfrac{x_1}{x_3})^2+(\tfrac{\sigma\gamma(t_i-t_j)+\rho\eps(t_j-t_k)}{\pi(t_i-t_k)}-\delta(\tfrac{t_i-t_k}{t_j-t_k}))\tfrac{x_1}{x_3}-\tfrac{\sigma}{\pi}(\tfrac{t_i-t_j}{t_j-t_k})=0$. These solutions are given by
\begin{align*}
\frac{x_1}{x_3}=\tfrac{1}{2\gamma\delta}\cdot\bigg(&-\tfrac{\sigma\gamma(t_i-t_j)(t_j-t_k)+\rho\eps(t_j-t_k)^2-\delta(t_i-t_k)^2}{\pi(t_i-t_k)(t_j-t_k)}\\
&\pm\sqrt{(\tfrac{\sigma\gamma(t_i-t_j)(t_j-t_k)+\rho\eps(t_j-t_k)^2-\delta(t_i-t_k)^2}{\pi(t_i-t_k)(t_j-t_k)})^2+\tfrac{4\gamma\delta\sigma}{\pi}(\tfrac{t_i-t_j}{t_j-t_k})}\bigg).
\end{align*}

Note that these solutions are given by B\'ezout's Theorem applied to $Q\cap B_{i,j,k}A_{i,j}L_{2,k}$. By Remark~\ref{rem:distinct}, these solutions are necessarily distinct.

\begin{notn}\label{notn:q}
Note that $\gamma,\delta,\eps$ (see Equation~\ref{notn:a,b,c}) and $\pi,\rho,\sigma$ (see Equation~\ref{notn:pi,rho,sigma}) depend on $i,j,k$. Let $\gamma_{i,j,k},\delta_{i,j,k},\eps_{i,j,k},\pi_{i,j,k},\rho_{i,j,k},\sigma_{i,j,k}$ denote the values of $\gamma,\delta,\eps,\pi,\rho,\sigma$ as functions of $i,j,k$. Furthermore, let
\begin{align*}
q_{i,j,k}^+=\tfrac{1}{2\gamma\delta}\cdot\bigg(&-\tfrac{\sigma\gamma(t_i-t_j)(t_j-t_k)+\rho\eps(t_j-t_k)^2-\delta(t_i-t_k)^2}{\pi(t_i-t_k)(t_j-t_k)}\\
&+\sqrt{(\tfrac{\sigma\gamma(t_i-t_j)(t_j-t_k)+\rho\eps(t_j-t_k)^2-\delta(t_i-t_k)^2}{\pi(t_i-t_k)(t_j-t_k)})^2+\tfrac{4\gamma\delta\sigma}{\pi}(\tfrac{t_i-t_j}{t_j-t_k})}\bigg)
\end{align*}
and
\begin{align*}
q_{i,j,k}^-=\tfrac{1}{2\gamma\delta}\cdot\bigg(&-\tfrac{\sigma\gamma(t_i-t_j)(t_j-t_k)+\rho\eps(t_j-t_k)^2-\delta(t_i-t_k)^2}{\pi(t_i-t_k)(t_j-t_k)}\\
&-\sqrt{(\tfrac{\sigma\gamma(t_i-t_j)(t_j-t_k)+\rho\eps(t_j-t_k)^2-\delta(t_i-t_k)^2}{\pi(t_i-t_k)(t_j-t_k)})^2+\tfrac{4\gamma\delta\sigma}{\pi}(\tfrac{t_i-t_j}{t_j-t_k})}\bigg).
\end{align*}
\end{notn}

Remark~\ref{rem:distinct} implies that $q_{i,j,k}^+\neq q_{i,j,k}^-$. It follows that we have the line $M_{q_{i,j,k}^\pm}=\mb{V}(x_0-q_{i,j,k}^\pm x_2,x_1-q_{i,j,k}^\pm x_3)$.

\begin{prop}\label{prop:e_k and l_ij}
We have the equations
\begin{align*}
E_k=\mb{V}(&x_0-t_ix_1-\delta_{i,j,k}q_{i,j,k}^+(x_2-t_ix_3),\\
&(\tfrac{t_i-t_k}{t_j-t_k}-\gamma_{i,j,k}q_{i,j,k}^+)(x_0-t_jx_1)-\eps_{i,j,k}q_{i,j,k}^+(x_2-t_jx_3))
\end{align*}
and
\begin{align*}
L_{i,j}=\mb{V}(&x_0-t_ix_1-\delta_{i,j,k}q_{i,j,k}^-(x_2-t_ix_3),\\
&(\tfrac{t_i-t_k}{t_j-t_k}-\gamma_{i,j,k}q_{i,j,k}^-)(x_0-t_jx_1)-\eps_{i,j,k}q_{i,j,k}^-(x_2-t_jx_3)).
\end{align*}
\end{prop}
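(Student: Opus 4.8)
The plan is to exploit that $g := B_{i,j,k}A_{i,j}$ is an invertible projective change of coordinates carrying the four skew lines $C_i, C_j, L_{1,k}, L_{2,k}$ to lines whose two common transversals have already been pinned down. Since $gC_i = \mb{V}(x_0,x_2)$, $gC_j = \mb{V}(x_1,x_3)$, and $gL_{1,k} = \mb{V}(x_0-x_1,x_2-x_3)$ all lie on the ruling $N_t$ of $Q$, and $E_k, L_{i,j}$ are the two lines meeting all of $C_i,C_j,L_{1,k},L_{2,k}$, their images $gE_k$ and $gL_{i,j}$ are exactly the two lines of the complementary ruling $M_s$ passing through the points of $Q\cap gL_{2,k}$. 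A line $M_s = \mb{V}(x_0-sx_2,x_1-sx_3)$ is determined by the ratio $\tfrac{x_1}{x_3}=s$, and the two intersection points correspond to $s=q_{i,j,k}^+$ and $s=q_{i,j,k}^-$, which are distinct by Remark~\ref{rem:distinct}. Labelling without loss of generality so that $gE_k=M_{q_{i,j,k}^+}$ and $gL_{i,j}=M_{q_{i,j,k}^-}$, it remains only to compute $E_k=g^{-1}M_{q_{i,j,k}^+}$ and $L_{i,j}=g^{-1}M_{q_{i,j,k}^-}$ explicitly.

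Rather than invert $g$, I would verify the stated equations by applying $g$ to them and checking that $M_{q_{i,j,k}^\pm}$ is recovered. For any linear form $\ell$ with coefficient vector $a$ one has $g\cdot\mb{V}(\ell)=\mb{V}(\ell\circ g^{-1})$, and by the rule recorded in Section~\ref{sec:notation} the coefficient vector of $\ell\circ g^{-1}$ is $(g^{-1})^T a=(B_{i,j,k}^{-1})^T(A_{i,j}^{-1})^T a$. Thus the whole computation uses only the two matrices $(A_{i,j}^{-1})^T$ and $(B_{i,j,k}^{-1})^T$ already written down, applied in that order, and no matrix inversion is required.

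Carrying this out, the first step is to observe that $(A_{i,j}^{-1})^T$ sends the coefficient vectors of $x_0-t_ix_1$, $x_2-t_ix_3$, $x_0-t_jx_1$, and $x_2-t_jx_3$ to scalar multiples of $x_0$, $x_2$, $x_1$, and $x_3$ respectively, each with factor $\pm(t_i-t_j)$. Hence applying $A_{i,j}$ collapses the first stated defining form of $E_k$ to a multiple of $x_0-\delta_{i,j,k}q_{i,j,k}^+x_2$ and the second to a multiple of $(\tfrac{t_i-t_k}{t_j-t_k}-\gamma_{i,j,k}q_{i,j,k}^+)x_1-\eps_{i,j,k}q_{i,j,k}^+x_3$. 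Applying $B_{i,j,k}$ next, the entry $\tfrac{1}{\delta}$ of $(B_{i,j,k}^{-1})^T$ absorbs the factor $\delta_{i,j,k}$ and turns the first form into $x_0-q_{i,j,k}^+x_2$, while the entry $-\tfrac{1}{\eps}$ together with the off-diagonal entry $\tfrac{\gamma(t_j-t_k)}{\eps(t_i-t_k)}$ act on the second form so that the two $\gamma_{i,j,k}q_{i,j,k}^+$ contributions cancel, leaving a multiple of $x_1-q_{i,j,k}^+x_3$. This shows that $g$ carries the line defined by the stated forms for $E_k$ to $M_{q_{i,j,k}^+}$, and since $g$ is a bijection on lines this identifies those equations with $E_k$; the case of $L_{i,j}$ is identical with $q_{i,j,k}^-$ in place of $q_{i,j,k}^+$.

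The one genuinely delicate point, and the step I expect to be the main obstacle, is the cancellation in the second defining form when $B_{i,j,k}$ is applied: the $\gamma_{i,j,k}q_{i,j,k}^+$ arising from the $x_1$-coefficient must be exactly annihilated by the $\gamma_{i,j,k}q_{i,j,k}^+$ produced when the off-diagonal entry $\tfrac{\gamma(t_j-t_k)}{\eps(t_i-t_k)}$ acts on the $x_3$-coefficient $-\eps_{i,j,k}q_{i,j,k}^+$. This cancellation is precisely what that off-diagonal entry of $(B_{i,j,k}^{-1})^T$ was engineered to produce; once it is checked, the remaining steps are routine rescalings by the nonzero scalars $t_i-t_j$, $\delta_{i,j,k}$, and $\eps_{i,j,k}$.
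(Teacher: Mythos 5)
Your proof is correct and follows essentially the same route as the paper: both identify $E_k$ and $L_{i,j}$ as the preimages under $B_{i,j,k}A_{i,j}$ of the ruling lines $M_{q_{i,j,k}^{\pm}}$ (with the same without-loss-of-generality labelling justified by Remark~\ref{rem:distinct}) and then carry out the linear change of coordinates on the defining forms. The only cosmetic difference is direction: the paper pushes $M_{q_{i,j,k}^{\pm}}$ forward under $(B_{i,j,k}A_{i,j})^{-1}$ by recording the matrices $A_{i,j}^T$ and $B_{i,j,k}^T$, whereas you verify the stated equations by applying $B_{i,j,k}A_{i,j}$ to them and recovering $M_{q_{i,j,k}^{\pm}}$; the cancellation of the $\gamma_{i,j,k}q_{i,j,k}^{\pm}$ terms that you flag does occur exactly as claimed.
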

\begin{proof}
Without loss of generality, we may assume $B_{i,j,k}A_{i,j}C_k=\mb{V}(x_0-q_{i,j,k}^+x_2,x_1-q_{i,j,k}^+x_3)$ and $B_{i,j,k}A_{i,j}L_{i,j}=\mb{V}(x_0-q_{i,j,k}^-x_2,x_1-q_{i,j,k}^-x_3)$. We thus have
\begin{align*}
C_k&=(B_{i,j,k}A_{i,j})^{-1}\mb{V}(x_0-q_{i,j,k}^+x_2,x_1-q_{i,j,k}^+x_3),\\
L_{i,j}&=(B_{i,j,k}A_{i,j})^{-1}\mb{V}(x_0-q_{i,j,k}^-x_2,x_1-q_{i,j,k}^-x_3).
\end{align*}
The inverse matrices are
\begin{align*}
A_{i,j}^T=\frac{1}{t_j-t_i}\begin{pmatrix}1& -1 & 0 & 0\\ -t_i & t_j & 0 & 0\\ 0 & 0& 1 & -1\\ 0 & 0 & -t_i & t_j\end{pmatrix}\quad\text{ and }\quad B_{i,j,k}^T=\begin{pmatrix}1 & 0 & 0 & 0\\ 0 & -\tfrac{t_i-t_k}{t_j-t_k} & 0 & -\gamma_{i,j,k}\\ 0 & 0 &\delta_{i,j,k} & 0\\ 0 & 0 & 0 & -\eps_{i,j,k}\end{pmatrix}.
\end{align*}
\end{proof}

\section{The general case}\label{sec:general}
Let $S'=\mb{V}(f')$ be a smooth cubic surface, where 
\[f'(x_0,x_1,x_2,x_3)=\sum_{i+j+k+l=3}\beta_{i,j,k,l}x_0^ix_1^jx_2^kx_3^l.\]
Moreover, let
\begin{align*}
\Lambda_1&=\mb{V}\big(\sum_{i=0}^3\mf{a}_ix_i,\sum_{i=0}^3\mf{a}_i'x_i\big),\\
\Lambda_2&=\mb{V}\big(\sum_{i=0}^3\mf{b}_ix_i,\sum_{i=0}^3\mf{b}_i'x_i\big),\\
\Lambda_3&=\mb{V}\big(\sum_{i=0}^3\mf{c}_ix_i,\sum_{i=0}^3\mf{c}_i'x_i\big)
\end{align*}
be three skew lines contained in $S'$. We will give a projective change of coordinates $A$ taking $\Lambda_i$ to $E_i$ for $1\leq i\leq 3$. Applying the work of the previous sections of the paper, we will have formulas for all 27 lines on $AS'$, with each $\alpha_{i,j,k,l}$ being given by a formula in terms of the $\beta_{i,j,k,l}$. The formulas for the 27 lines on $S'$ will then be obtained by applying $A^{-1}$. Consider the matrix
\[(B^{-1})^T=\begin{pmatrix}
\mf{a}_0&\mf{a}_0'&\mf{b}_0&\mf{b}_0'\\
\mf{a}_1&\mf{a}_1'&\mf{b}_1&\mf{b}_1'\\
\mf{a}_2&\mf{a}_2'&\mf{b}_2&\mf{b}_2'\\
\mf{a}_3&\mf{a}_3'&\mf{b}_3&\mf{b}_3'
\end{pmatrix},\]

which gives $BE_1=\Lambda_1$ and $BE_2=\Lambda_2$. Since $\Lambda_1$ and $\Lambda_2$ are skew, $B$ is non-singular. Next, we will give a projective change of coordinates $C$ that fixes $E_1$ and $E_2$ and takes $B^{-1}\Lambda_3$ to $E_3$. The composite change of coordinates $CB^{-1}$ will then be the desired change of coordinates $A$. Let
\[B^{-1}\Lambda_3=\mb{V}\big(\sum_{i=0}^3\mf{c}_ix_i,\sum_{i=0}^3\mf{c}_i'x_i\big).\]

Since $B$ is non-singular, the lines $E_1,E_2$, and $B^{-1}\Lambda_3$ are skew. Thus $B^{-1}\Lambda_3$ is not a subspace of $\{x_0=0\}$ or $\{x_3=0\}$, so $B^{-1}\Lambda_3$ is determined by the points $B^{-1}\Lambda_3\cap\{x_0=0\}=\pcoor{0:a:b:c}$ and $B^{-1}\Lambda_3\cap\{x_3=0\}=\pcoor{d:e:f:0}$. Moreover, since $B^{-1}\Lambda_3$ does not meet $E_1$ or $E_2$, we may assume that $B^{-1}\Lambda_3\cap\{x_0=0\}=\pcoor{0:1:b:c}$ and $B^{-1}\Lambda_3\cap\{x_3=0\}=\pcoor{d:e:1:0}$. In terms of the defining equations for $B^{-1}\Lambda_3$, we have
\begin{align*}
b&=\frac{\mf{c}_1'\mf{c}_3-\mf{c}_1\mf{c}_3'}{\mf{c}_2\mf{c}_3'-\mf{c}_2'\mf{c}_3},\quad c=\frac{\mf{c}_1\mf{c}_2'-\mf{c}_1'\mf{c}_2}{\mf{c}_2\mf{c}_3'-\mf{c}_2'\mf{c}_3},\\
d&=\frac{\mf{c}_1\mf{c}_2'-\mf{c}_1'\mf{c}_2}{\mf{c}_0\mf{c}_1'-\mf{c}_0'\mf{c}_1},\quad e=\frac{\mf{c}_0'\mf{c}_2-\mf{c}_0\mf{c}_2'}{\mf{c}_0\mf{c}_1'-\mf{c}_0'\mf{c}_1}.
\end{align*}

Note that $c$ and $d$ are either both zero or both non-zero. If $c,d$ are both zero, then we instead construct a projective change of coordinates taking $B^{-1}\Lambda_3\cap\{x_1=0\}$ and $B^{-1}\Lambda_3\cap\{x_2=0\}$ to $\pcoor{1:0:1:0}$ and $\pcoor{0:1:0:1}$, respectively. We omit these calculations and simply discuss the case when $c,d$ are non-zero. If $c,d$ are non-zero, the projective change of coordinates given by
\[C=\begin{pmatrix}\tfrac{1}{d}&0&0&0\\-\tfrac{e}{d}&1&0&0\\0&0&1&-\tfrac{b}{c}\\0&0&0&\tfrac{1}{c}\end{pmatrix}\]
gives us $C(\pcoor{0:1:b:c})=\pcoor{0:1:0:1}$ and $C(\pcoor{d:e:1:0})=\pcoor{1:0:1:0}$. Thus $CB^{-1}\Lambda_3=E_3$. Moreover, $CE_1=E_1$ and $CE_2=E_2$, so the projective change of coordinates $A=CB^{-1}$ takes $\Lambda_1,\Lambda_2,\Lambda_3$ to $E_1,E_2,E_3$, as desired. We may thus apply the work done in previous sections to the surface $CB^{-1}S'$, where the $\alpha_{i,j,k,l}$ will now be determined as functions of $\beta_{i,j,k,l}$. For each line $L\subset S$, we then get a line $BC^{-1}L\subset S'$.

\section{Smooth cubic surfaces over $\mb{R}$}\label{sec:real case}
Over the real numbers, Schl\"afli showed that a smooth cubic surface contains 3, 7, 15, or 27 lines~\cite{Sch58}. Segre further classifies these lines into two types, namely {\it hyperbolic lines} and {\it elliptic lines}~\cite{Seg42}. Finashin--Kharlamov~\cite{FK13} and Okonek--Teleman~\cite{OT11} note that Segre in fact proved that the difference between the number $h$ of hyperbolic lines and the number $e$ of elliptic lines on a real smooth cubic surface is always 3. We note that if we are given three skew lines on a real smooth cubic surface $S$, then we have at least one real root of $g(t)$ (see Proposition~\ref{prop:c_i}). Without loss of generality, we may assume that $t_4$ is a real root of $g(t)$, and we thus have that the line $C_4$ is defined over $\mb{R}$. In this case, $S$ contains more than three lines and therefore must contain elliptic lines. As a result, we have proved the following proposition.

\begin{prop}\label{prop:real-only3}
If $S$ is a real smooth cubic surface that contains no elliptic lines, then the three lines contained in $S$ are not skew.
\end{prop}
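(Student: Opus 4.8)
The plan is to prove the contrapositive: if the three real lines on $S$ are skew, then $S$ must contain at least a fourth line, and hence (by Segre's relation $h-e=3$ recorded just above) must contain an elliptic line. The first thing to record is the counting input. A surface with no elliptic lines has $e=0$, so $h=e+3=3$, meaning it contains exactly three real lines. Thus it suffices to show that three skew real lines force the existence of a fourth real line.

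Next I would reduce to the normalized configuration. Suppose the three lines $\Lambda_1,\Lambda_2,\Lambda_3$ are skew. Because they are defined over $\mb{R}$, the change-of-coordinates construction of Section~\ref{sec:general} can be carried out with real entries: the coefficients $\mf{a}_i,\mf{b}_i,\mf{c}_i$ of the defining equations are real, and so are the derived scalars $b,c,d,e$, whence the composite $A=CB^{-1}$ lies in $\mr{GL}_4(\mb{R})$. After applying $A$, we may assume $\Lambda_i=E_i$ for $1\leq i\leq 3$, and the transformed surface still has real coefficients $\alpha_{i,j,k,l}$. In particular, the cubic $g(t)$ of Proposition~\ref{prop:c_i} has real coefficients.

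Then comes the key step. A real cubic polynomial has at least one real root, say $t_4\in\mb{R}$. By Proposition~\ref{prop:c_i}, this root produces the line $C_4=\mb{V}(x_0-t_4x_1,x_2-t_4x_3)$, which is defined over $\mb{R}$ precisely because $t_4$ is real. Since $C_4$ lies on the ruling $N_t$ of the quadric $Q$, opposite to the ruling $M_s$ that contains $E_1,E_2,E_3$, it is distinct from each $E_i$. Hence $S$ contains at least four real lines $E_1,E_2,E_3,C_4$, contradicting the count of exactly three. Therefore the three lines cannot be skew, which is the desired statement.

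The argument is ultimately a counting contradiction, so there is no serious analytic difficulty; the one point deserving care is verifying that the coordinate change sending the skew lines to $E_1,E_2,E_3$ may be taken over $\mb{R}$, so that $g(t)$ genuinely has real coefficients and its guaranteed real root yields a genuinely real line. Once that reality is confirmed, the intermediate value theorem applied to the real cubic $g(t)$ does all the work.
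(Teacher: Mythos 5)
Your proof is correct and follows essentially the same route as the paper: reduce to the standard skew configuration $E_1,E_2,E_3$, observe that the real cubic $g(t)$ has a real root $t_4$ producing the real line $C_4$, and conclude from Segre's relation $h-e=3$ that a surface with no elliptic lines has only three lines, a contradiction. Your added care in checking that the coordinate change of Section~\ref{sec:general} can be taken in $\mr{GL}_4(\mb{R})$ is a detail the paper leaves implicit, but the argument is the same.
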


In fact, we can prove that $S$ contains three skew lines if and only if $S$ contains an elliptic line. First, we prove a basic graph theoretic fact that will simplify our argument.

\begin{prop}\label{prop:graph}
Let $G$ be a graph of order at least seven, such that for any triple of vertices $v_1,v_2,v_3$, at least two of $v_1,v_2,v_3$ are connected by an edge. Then $G$ contains two distinct 3-cycles that share an edge.
\end{prop}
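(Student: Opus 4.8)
The plan is to reformulate the hypothesis and then argue by contradiction. The condition that every triple of vertices spans at least one edge says exactly that $G$ has no independent set of size $3$, i.e. $\alpha(G)\le 2$; equivalently, the complement $\overline{G}$ is triangle-free. Two distinct $3$-cycles sharing an edge are the same thing as an edge $ab$ together with two further vertices $c,d$ each adjacent to both $a$ and $b$ (a copy of $K_4-e$, or of $K_4$ if $cd$ is also present). So I would assume, for contradiction, that no edge of $G$ lies in two triangles, and try to force $n\le 6$.

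First I would produce a single triangle to anchor the argument. If $G$ were triangle-free, then both $G$ and $\overline{G}$ would be triangle-free, which is impossible on at least $6$ vertices because $R(3,3)=6$; since $n\ge 7$, this yields a triangle $\{x,y,z\}$ in $G$. Let $W=V(G)\setminus\{x,y,z\}$, so that $|W|=n-3\ge 4$. The key observation is that the no-shared-edge assumption tightly restricts how $W$ attaches to this triangle: if some $w\in W$ were adjacent to two of $x,y,z$, say to $x$ and $y$, then the triangles $xyz$ and $xyw$ would share the edge $xy$, a contradiction. Hence every $w\in W$ is adjacent to at most one of $x,y,z$, i.e. non-adjacent to at least two of them. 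Now take any two vertices $w_1,w_2\in W$. Their sets of non-neighbors inside $\{x,y,z\}$ each have size at least $2$ in a $3$-element set, so they intersect; let $p$ be a common non-neighbor. Then in the triple $\{p,w_1,w_2\}$ the edges $pw_1$ and $pw_2$ are both absent, so $\alpha(G)\le 2$ forces $w_1w_2$ to be an edge. Thus $W$ is a clique of size at least $4$, so $G\supseteq K_4$, which plainly contains two triangles sharing an edge — the desired contradiction.

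The engine of the argument is the interplay of the last two steps: the forbidden book configuration forces each external vertex to meet the fixed triangle in at most one vertex, and the independence hypothesis then converts this sparseness toward the triangle into density among the external vertices, producing a $K_4$. I expect the only real subtlety to be bookkeeping, namely the pigeonhole remark that two subsets of size $\ge 2$ in a $3$-element set must meet, and checking that $n\ge 7$ (hence $|W|\ge 4$) is exactly what is needed to reach $K_4$. The bound is sharp: the triangular prism $\overline{C_6}$ has $\alpha=2$ yet its only two triangles are disjoint, so the hypothesis of order at least seven cannot be relaxed.
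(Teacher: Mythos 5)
Your proof is correct, but it takes a genuinely different route from the paper's. The paper argues directly by a case analysis on the component structure of $G$: three or more components are impossible, two components force the larger one to be a clique on at least four vertices, and in the connected case one fixes a vertex $v$ and splits according to the size of the set of vertices at distance greater than $1$ from $v$, the last two subcases being handled somewhat informally via the configurations in Figures~\ref{fig:graph1} and~\ref{fig:graph2}. You instead argue by contradiction: you extract a triangle $xyz$ (using $R(3,3)=6$ together with the observation that the hypothesis makes $\overline{G}$ triangle-free), note that the assumption that no edge lies in two triangles forces every vertex outside the triangle to be adjacent to at most one of $x,y,z$, and then use $\alpha(G)\le 2$ to make the at least four outside vertices pairwise adjacent, producing a $K_4$ and hence the forbidden configuration. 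Each step checks out, including the pigeonhole argument that two $2$-element subsets of $\{x,y,z\}$ must meet. Your argument is shorter and more uniform, it avoids the informal ``select three pairwise non-adjacent vertices and repeat'' step at the end of the paper's proof, and it comes with a sharpness example ($\overline{C_6}$, the triangular prism, whose only two triangles are disjoint) that the paper does not record; the only external input is the elementary Ramsey bound $R(3,3)=6$, which the paper's purely hands-on case analysis does not need.
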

\begin{proof}
If $G$ has at least three connected components, then three vertices coming from distinct components do not share any edges, so $G$ can have at most two connected components. If $G$ has two connected components (say $G_1$ and $G_2$), then one component of $G$ has at least four vertices. Without loss of generality, we may assume that $G_1$ has at least four vertices. Taking a vertex from $G_2$, the component $G_1$ must have diameter 1, which implies that $G_1$ contains two distinct 3-cycles that share an edge. 

Finally, suppose that $G$ is connected. Fixing a vertex $v$ of $G$, the subgraph $G'$ of vertices that are distance greater than 1 from $v$ must have diameter 1. If $G'$ has four or more vertices, then $G$ contains two distinct 3-cycles that share an edge. If $G'$ contains zero or one vertex, then $v$ has at least five adjacent vertices. Any triple of these $v$-adjacent vertices must have at least one edge between them, which forces $G$ to contain two distinct 3-cycles that share an edge. If $G'$ contains two vertices, then $G$ contains the graph illustrated in Figure~\ref{fig:graph1}. If $G'$ contains three vertices, then $G$ contains the graph illustrated in Figure~\ref{fig:graph2}. In either case, we select three vertices that are pairwise non-adjacent and add an edge between two of them. Repeating this process will always yield two distinct 3-cycles that share an edge, as desired.
\end{proof}

\begin{figure}[h]
\begin{minipage}[b]{0.45\linewidth}
\centering
\begin{tikzpicture}
\node at (0,-0.5) {$v$};
\draw[fill=black] (0,0) circle (3pt);
\draw[fill=black] (1,0) circle (3pt);
\draw[fill=black] (-1,1) circle (3pt);
\draw[fill=black] (0,1) circle (3pt);
\draw[fill=black] (-1,0) circle (3pt);
\draw[fill=black] (1,1) circle (3pt);
\draw[fill=black] (2,0) circle (3pt);
\draw[thick] (-1,0) -- (0,0) -- (1,1) -- (1,0) -- (2,0);
\draw[thick] (-1,1) -- (0,0);
\draw[thick] (0,0) -- (0,1);
\end{tikzpicture}
\caption{}\label{fig:graph1}
\end{minipage}
\hspace{0.5cm}
\begin{minipage}[b]{0.45\linewidth}
\centering
\begin{tikzpicture}
\node at (0,-0.5) {$v$};
\draw[fill=black] (0,0) circle (3pt);
\draw[fill=black] (1,1) circle (3pt);
\draw[fill=black] (1,0) circle (3pt);
\draw[fill=black] (0,1) circle (3pt);
\draw[fill=black] (2,0) circle (3pt);
\draw[fill=black] (3,1) circle (3pt);
\draw[fill=black] (2,1) circle (3pt);
\draw[thick] (0,0) -- (1,1);
\draw[thick] (0,0) -- (1,0);
\draw[thick] (0,0) -- (0,1);
\draw[thick] (1,0) -- (2,0) -- (2,1) -- (3,1) -- (2,0);
\end{tikzpicture}
\caption{}\label{fig:graph2}
\end{minipage}
\end{figure}

\begin{lem}\label{lem:real contains skew}
A real smooth cubic surface $S$ contains three skew lines if and only if $S$ contains an elliptic line.
\end{lem}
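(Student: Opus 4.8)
The plan is to prove the two directions separately, with the reverse implication (elliptic line $\Rightarrow$ three skew lines) being the substantive one; the forward direction is essentially already recorded.

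For the forward direction I would simply invoke the contrapositive of Proposition~\ref{prop:real-only3}. If $S$ contains no elliptic line, then $e=0$, and since $h-e=3$ the surface contains exactly $h+e=3$ real lines. Proposition~\ref{prop:real-only3} asserts that these three lines are not mutually skew, so $S$ contains no three skew lines. This is precisely the contrapositive of the statement ``three skew lines $\Rightarrow$ elliptic line,'' so nothing further is needed here.

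For the reverse direction I would argue by contradiction. Suppose $S$ contains an elliptic line but no three mutually skew real lines. Since $e\geq 1$ and $h=e+3$, the surface has $2e+3\geq 5$ real lines; by Schl\"afli's count the number of real lines lies in $\{3,7,15,27\}$, so in fact $S$ has at least $7$ real lines. Form the graph $G$ whose vertices are the real lines of $S$, with an edge joining two lines exactly when they meet. The assumption that no three real lines are mutually skew says precisely that every triple of vertices contains at least one edge, so $G$ satisfies the hypothesis of Proposition~\ref{prop:graph}. As $G$ has order at least $7$, Proposition~\ref{prop:graph} produces two distinct $3$-cycles sharing an edge, and the goal becomes to show that this configuration cannot occur geometrically.

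The crux is this geometric step. A $3$-cycle in $G$ is a triple of real lines that pairwise meet, and I would first check that such a triple is always coplanar: three pairwise-incident lines in $\mb{P}^3$ are either coplanar or concurrent, and three concurrent lines of $S$ all lie in the tangent plane at their common point (an Eckardt point), hence are coplanar in either case. Thus a $3$-cycle spans a plane $H$ with $H\cap S$ consisting of three lines. Now two distinct $3$-cycles sharing an edge would be triples $\{a,b,c\}$ and $\{a,b,d\}$ with $c\neq d$; but the two meeting lines $a,b$ span a unique plane, and by B\'ezout's Theorem $S$ meets that plane in exactly three lines, forcing the residual line to be unique, i.e. $c=d$. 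This contradiction completes the reverse direction. I expect the main obstacle to be exactly this geometric verification---justifying coplanarity of a pairwise-meeting triple while accounting for possible Eckardt points, and the uniqueness of the residual line---since the graph-theoretic and enumerative bookkeeping are routine once Proposition~\ref{prop:graph} and the relation $h-e=3$ are in hand.
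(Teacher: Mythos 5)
Your proposal is correct and follows essentially the same route as the paper: the forward direction via the contrapositive of Proposition~\ref{prop:real-only3}, and the reverse direction by using $h-e=3$ and Schl\"afli's count to get at least seven real lines, forming the incidence graph, and combining Proposition~\ref{prop:graph} with the B\'ezout argument that two $3$-cycles cannot share an edge. Your explicit justification that a pairwise-meeting triple is coplanar even in the concurrent (Eckardt point) case is a welcome detail that the paper's proof leaves implicit.
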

\begin{proof}
By Proposition~\ref{prop:real-only3} and Schl\"afli's count of lines on a real smooth cubic surface, we may assume that $S$ contains at least seven real lines, say $\Lambda_1,...,\Lambda_7$. We represent $\{\Lambda_i\}$ and their intersections as a graph $G$. The vertices of $G$ are given by the lines $\Lambda_i$, and vertices are connected by an edge whenever the corresponding lines intersect each other. Note that a 3-cycle corresponds to three coplanar lines. By B\'ezout's Theorem, the plane containing these lines cannot intersect $S$ in another line, so we cannot have two distinct 3-cycles in $G$ that share an edge. The contrapositive of Proposition~\ref{prop:graph} implies that $G$ has three vertices with no shared edge among them, which means that $S$ contains three skew lines.
\end{proof}

We are now prepared to give a proof of Theorem~\ref{thm:main theorem}. If $S$ is a real smooth cubic surface that contains an elliptic line, then we can determine the number of real lines contained in $S$ by analyzing the formulas obtained in this paper.

\begin{proof}[Proof of Theorem~\ref{thm:main theorem}]
By Lemma~\ref{lem:real contains skew}, $S$ contains three skew lines. Without loss of generality, we may assume that $S$ contains the lines $E_1=\mb{V}(x_0,x_1)$, $E_2=\mb{V}(x_2,x_3)$, $E_3=\mb{V}(x_0-x_2,x_1-x_3)$ and that $t_4$ is a real root of $g(t)$. We thus have that the lines $C_4,L_{1,4},L_{2,4},L_{3,4}$ are defined over $\mb{R}$. 

If $S$ contains exactly 7 real lines, then this accounts for all lines contained in $S$, so $g(t)$ can only have one real root. Moreover, Proposition~\ref{prop:c_3 and l_12} implies that $C_3$ and $L_{1,2}$ are defined over $\mb{R}(t_4,s_1)$ and $\mb{R}(t_4,s_2)$ respectively, so $s_1$ and $s_2$ cannot be real numbers in this case. Conversely, if $g(t)$ only has one real root, then $C_5,C_6,L_{i,5},L_{i,6}$ are not real for $1\leq i\leq 3$, so $S$ contains at most 19 lines. Furthermore, if $s_1,s_2\not\in\mb{R}$, then $C_3$ and $L_{1,2}$ are not defined over $\mb{R}$. If two coplanar lines are real, their residual line must also be real. It follows that $C_1,C_2,L_{1,3}$ are not defined over $\mb{R}$, as these are coplanar with $E_2,E_1,E_1$ and residual to $L_{1,2},L_{1,2},C_3$ respectively. Thus $S$ contains at most 14 real lines, so $S$ must contain exactly 7 lines. This proves (a). 

If all roots of $g(t)$ and $s_1,s_2$ are real, then all lines computed in Sections~\ref{sec:step1}--\ref{sec:step4} are real. Moreover, Harris shows that the remaining lines on $S$ are rationally determined~\cite[p. 719]{Har79}, which gives us that all lines on $S$ are real. Conversely, if a root of $g(t)$ or $s_1,s_2$ were not real, then some of the lines in $S$ would not be defined over $\mb{R}$, proving (c). 

Finally, if all roots of $g(t)$ are real and $s_1,s_2$ are not real, then our process gives us all the lines up until $C_3$ and $L_{1,2}$ (see Sections~\ref{sec:step1} and \ref{sec:step2}), yielding a total of 15 lines on $S$. Moreover, the lines $C_3$ and $L_{1,2}$ are not real by Proposition~\ref{prop:c_3 and l_12}, so $S$ contains fewer than 27 real lines and hence contains exactly 15 real lines. Similarly, if $g(t)$ has only one real root (which we label $t_4$) and $s_1,s_2$ are real, then precisely the lines $L_{5,6},E_i,C_j,L_{i,j}$ are real for $1\leq i,j\leq 4$. Conversely, suppose $S$ contains exactly 15 real lines. Then part (a) and part (c) imply that either $g(t)$ has one real root and $s_1,s_2$ are real, or all roots of $g(t)$ are real and $s_1,s_2$ are not real, which proves (b). 
\end{proof}

\bibliography{lines-cubic-surfaces}{}
\bibliographystyle{alpha}
\addresseshere

\newpage
\appendix
\section{Visualizations of real cubic surfaces}\label{sec:graphics}
Using the formulas generated in this paper, we are able to write down explicit equations for real cubic surfaces with 27, 15, or 7 lines. Let
\begin{align*}
f_1=&\ x_0^2x_2-x_0x_2^2+x_0^2x_3-x_0x_1x_2+\tfrac{17}{39}x_1x_2^2-\tfrac{17}{39}x_0x_2x_3\\
&+2x_1^2x_2-3x_0x_1x_3+\tfrac{12}{13}x_0x_3^2+\tfrac{1}{13}x_1x_2x_3,\\
f_2=&\ x_0^2x_2-x_0x_2^2+x_0^2x_3-x_0x_1x_2+x_1^2x_2-2x_0x_1x_3+x_1x_2^2\\
&-x_0x_2x_3-x_0x_3^2+2x_1x_2x_3,\\
f_3=&\ x_0^2x_2-x_0x_2^2+2x_0^2x_3-2x_0x_1x_2+x_1^2x_2-x_0x_1x_3+x_1^2x_3-x_1x_3^2.
\end{align*}

Figure~\ref{fig:27lines} shows the vanishing of $f_1$ as a real cubic surface with its 27 lines. Figure~\ref{fig:15lines} shows the vanishing of $f_2$ as a real cubic surface with its 15 lines. Figure~\ref{fig:7lines} shows the vanishing of $f_3$ as a real cubic surface with its 7 lines. These figures were generated by Steve Trettel using the equations above.

\begin{figure}[h]
\includegraphics[trim = 2 2 2 2,clip]{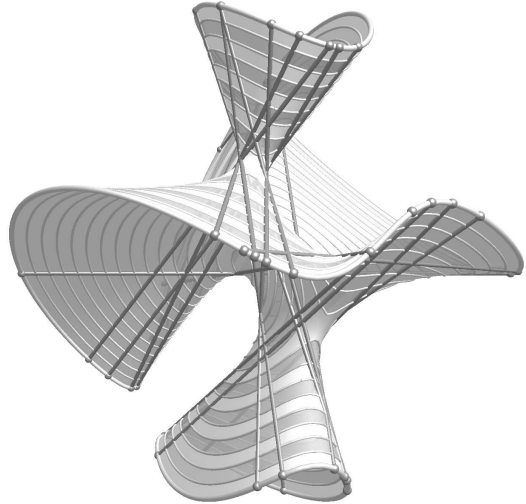}
\caption{Real cubic surface with 27 lines}\label{fig:27lines}
\end{figure}

\begin{figure}[h]
\includegraphics[trim = 2 2 2 2,clip]{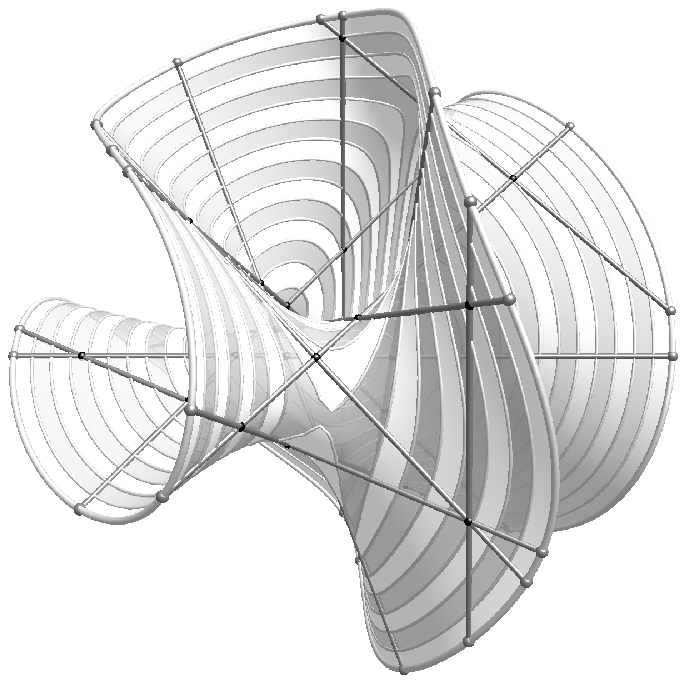}
\caption{Real cubic surface with 15 lines}\label{fig:15lines}
\end{figure}

\begin{figure}[h]
\includegraphics[trim = 2 2 2 2,clip]{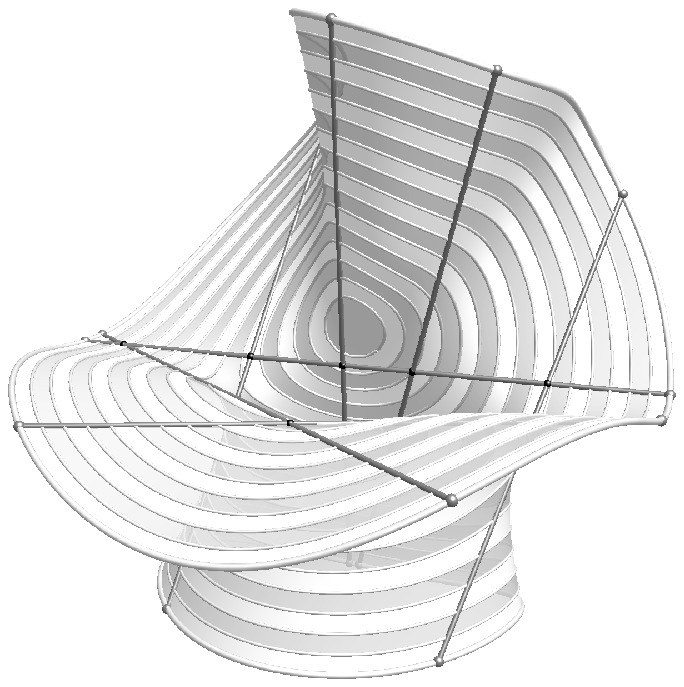}
\caption{Real cubic surface with 7 lines}\label{fig:7lines}
\end{figure}

\newpage
\section{Table of lines}\label{sec:table}
In the following tables, we describe a line $L=\mb{V}\displaystyle\big(\sum_{i=0}^3 a_ix_i,\sum_{i=0}^3 b_ix_i\big)$ by listing its coefficients $a_0,...,a_3,b_0,...,b_3$ as follows:

\begin{table}[h]
\renewcommand{\arraystretch}{1.6}
\aboverulesep=0ex
\belowrulesep=0ex
\centering
\begin{tabular}{@{}l|c|c|c|c@{}}
\toprule
$L$ & $a_0$ & $a_1$ & $a_2$ & $a_3$ \\& $b_0$ & $b_1$ & $b_2$ & $b_3$\\
\bottomrule
\end{tabular}
\end{table}
We also provide references to the relevant notation from throughout the paper.
\begin{table}[h]
\centering
\begin{tabular}{@{}cl@{}}
\toprule
$\alpha_{i,j,k,l}$ & Section~\ref{sec:surface}\\
\midrule
$t_4,t_5,t_6$ & Proposition~\ref{prop:c_i}\\
\midrule
$c_1,c_2,c_3,d_1,d_2,d_3$ & Notation~\ref{notn:c_i and d_i}\\
\midrule
$u_1,u_2,u_3,v_2,v_3$ & Notation~\ref{notn:u_i and v_i}\\
$s_1,s_2$ & Notation~\ref{notn:s_i}\\
\midrule
$m_1,n_1,p_1,m_2,n_2,p_2$ & Notation~\ref{notn:m_i,n_i,p_i}\\
\midrule
$h_1,j_1,k_1,h_2,j_2,k_2$ & Notation~\ref{notn:h_i,j_i,k_i}\\
\midrule
$\gamma_{i,j,k},\delta_{i,j,k},\eps_{i,j,k}$ & Equation~\ref{notn:a,b,c}\\
$\pi_{i,j,k},\rho_{i,j,k},\sigma_{i,j,k}$ & Equation~\ref{notn:pi,rho,sigma}\\
$q_{i,j,k}^\pm$ & Notation~\ref{notn:q}\\
\bottomrule
\end{tabular}
\end{table}

\begin{table}[h]
\begin{adjustbox}{totalheight=\textheight}
\renewcommand{\arraystretch}{1.6}
\aboverulesep=0ex
 \belowrulesep=0ex
\centering
\begin{tabular}{@{}l|c|c|c|c@{}}
\toprule
$E_1$ & \phantom{filler}1\phantom{filler} & 0 & 0 & 0 \\& 0 & 1 & 0 & 0\\ \midrule
$E_2$ & 0 & 0 & 1 & 0 \\& 0 & 0 & 0 & 1\\ \midrule
$E_3$ & 1 & 0 & $-1$ & 0\\ & 0 & 1 & 0 & $-1$\\ \midrule
$E_4$ & $1$& $-t_5$& $-\delta_{5,6,4}q_{5,6,4}^+$ & $t_5\delta_{5,6,4}q_{5,6,4}^+$\\ & $\tfrac{t_5-t_4}{t_6-t_4}-\gamma_{5,6,4}q_{5,6,4}^+$& $-t_6(\tfrac{t_5-t_4}{t_6-t_4}-\gamma_{5,6,4}q_{5,6,4}^+)$& $-\eps_{5,6,4}q_{5,6,4}^+$ & $t_6\eps_{5,6,4}q_{5,6,4}^+$\\ \midrule
$E_5$ & $1$& $-t_4$& $-\delta_{4,6,5}q_{4,6,5}^+$ & $t_4\delta_{4,6,5}q_{4,6,5}^+$\\ & $\tfrac{t_4-t_5}{t_6-t_5}-\gamma_{4,6,5}q_{4,6,5}^+$& $-t_6(\tfrac{t_4-t_5}{t_6-t_5}-\gamma_{4,6,5}q_{4,6,5}^+)$& $-\eps_{4,6,5}q_{4,6,5}^+$ & $t_6\eps_{4,6,5}q_{4,6,5}^+$\\ \midrule
$E_6$ & $1$& $-t_4$& $-\delta_{4,5,6}q_{4,5,6}^+$ & $t_4\delta_{4,5,6}q_{4,5,6}^+$\\ & $\tfrac{t_4-t_6}{t_5-t_6}-\gamma_{4,5,6}q_{4,5,6}^+$& $-t_5(\tfrac{t_4-t_6}{t_5-t_6}-\gamma_{4,5,6}q_{4,5,6}^+)$& $-\eps_{4,5,6}q_{4,5,6}^+$ & $t_5\eps_{4,5,6}q_{4,5,6}^+$\\ \midrule
$C_1$ & $h_2$ &$j_2$ &$\tfrac{(\overline{s_2c_3-t_4})k_2}{|1+s_2c_2|^2+|s_2c_3-t_4|^2}$ & $-\tfrac{(\overline{1+s_2c_2})k_2}{|1+s_2c_2|^2+|s_2c_3-t_4|^2}$\\ &0 &0 &$1+s_2c_2$ &$s_2c_3-t_4$\\ \midrule
$C_2$ & 1& $-s_2c_1-t_4$&0 &0\\ &0 &$m_2$ &$n_2$ &$p_2$\\ \midrule
$C_3$ & 1 & $-s_1c_1-t_4$ &0 &0\\ & 0&0 & $1+s_1c_2$ & $s_1c_3-t_4$\\ \midrule
$C_4$ & \phantom{filler}1\phantom{filler}& $-t_4$&0 &0 \\&0 &0 &1 &$-t_4$\\ \midrule
$C_5$ & 1& $-t_5$&0 &0 \\&0 &0 &1 &$-t_5$\\ \midrule
$C_6$ & 1& $-t_6$&0 &0 \\&0 &0 &1 &$-t_6$\\ \midrule
$L_{1,2}$ & 1 & $-s_2c_1-t_4$ &0 &0\\ & 0&0 & $1+s_2c_2$ & $s_2c_3-t_4$\\ \midrule
$L_{1,3}$ & 1& $-s_1c_1-t_4$&0 &0\\ &0 &$m_1$ &$n_1$ &$p_1$\\ \midrule
$L_{1,4}$ & 1& $-t_4$& 0&0\\ &0&$t_4^2\alpha_{2,0,1,0}+t_4\alpha_{1,1,1,0}+\alpha_{0,2,1,0}$ &$t_4\alpha_{1,0,2,0}+\alpha_{0,1,2,0}$ &$t_4^2\alpha_{1,0,2,0}+t_4(\alpha_{0,1,2,0}+\alpha_{1,0,1,1})+\alpha_{0,1,1,1}$ \\ \midrule
$L_{1,5}$ & 1& $-t_5$& 0&0\\ &0&$t_5^2\alpha_{2,0,1,0}+t_5\alpha_{1,1,1,0}+\alpha_{0,2,1,0}$ &$t_5\alpha_{1,0,2,0}+\alpha_{0,1,2,0}$ &$t_5^2\alpha_{1,0,2,0}+t_5(\alpha_{0,1,2,0}+\alpha_{1,0,1,1})+\alpha_{0,1,1,1}$ \\ \midrule
$L_{1,6}$ & 1& $-t_6$& 0&0\\ &0&$t_6^2\alpha_{2,0,1,0}+t_6\alpha_{1,1,1,0}+\alpha_{0,2,1,0}$ &$t_6\alpha_{1,0,2,0}+\alpha_{0,1,2,0}$ &$t_6^2\alpha_{1,0,2,0}+t_6(\alpha_{0,1,2,0}+\alpha_{1,0,1,1})+\alpha_{0,1,1,1}$ \\ \midrule
$L_{2,3}$ & $h_1$ &$j_1$ &$\tfrac{(\overline{s_1c_3-t_4})k_1}{|1+s_1c_2|^2+|s_1c_3-t_4|^2}$ & $-\tfrac{(\overline{1+s_1c_2})k_1}{|1+s_1c_2|^2+|s_1c_3-t_4|^2}$\\ &0 &0 &$1+s_1c_2$ &$s_1c_3-t_4$\\ \midrule
$L_{2,4}$ & 0& 0& 1&$-t_4$\\ &$t_4\alpha_{2,0,1,0}+\alpha_{2,0,0,1}$ &$t_4^2\alpha_{2,0,1,0}+t_4(\alpha_{2,0,0,1}+\alpha_{1,1,1,0})+\alpha_{1,1,0,1}$ &0 &$t_4^2\alpha_{1,0,2,0}+t_4\alpha_{1,0,1,1}+\alpha_{1,0,0,2}$\\ \midrule
$L_{2,5}$ & 0& 0& 1&$-t_5$\\&$t_5\alpha_{2,0,1,0}+\alpha_{2,0,0,1}$ &$t_5^2\alpha_{2,0,1,0}+t_5(\alpha_{2,0,0,1}+\alpha_{1,1,1,0})+\alpha_{1,1,0,1}$ & 0&$t_5^2\alpha_{1,0,2,0}+t_5\alpha_{1,0,1,1}+\alpha_{1,0,0,2}$ \\ \midrule
$L_{2,6}$ & 0& 0& 1&$-t_6$\\&$t_6\alpha_{2,0,1,0}+\alpha_{2,0,0,1}$ &$t_6^2\alpha_{2,0,1,0}+t_6(\alpha_{2,0,0,1}+\alpha_{1,1,1,0})+\alpha_{1,1,0,1}$ & 0&$t_6^2\alpha_{1,0,2,0}+t_6\alpha_{1,0,1,1}+\alpha_{1,0,0,2}$ \\ \midrule
$L_{3,4}$ & 1& $-t_4$& $-1$&$t_4$\\ &0 &$t_4^2\alpha_{2,0,1,0}+t_4\alpha_{1,1,1,0}+\alpha_{0,2,1,0}$ &$t_4\alpha_{2,0,1,0}+\alpha_{0,1,2,0}+\alpha_{1,1,1,0}$ &$t_4\alpha_{2,0,0,1}-\alpha_{1,0,0,2}$\\ \midrule
$L_{3,5}$ & 1& $-t_5$& $-1$&$t_5$\\ &0 &$t_5^2\alpha_{2,0,1,0}+t_5\alpha_{1,1,1,0}+\alpha_{0,2,1,0}$ &$t_5\alpha_{2,0,1,0}+\alpha_{0,1,2,0}+\alpha_{1,1,1,0}$ &$t_5\alpha_{2,0,0,1}-\alpha_{1,0,0,2}$\\ \midrule
$L_{3,6}$ & 1& $-t_6$& $-1$&$t_6$\\ &0 &$t_6^2\alpha_{2,0,1,0}+t_6\alpha_{1,1,1,0}+\alpha_{0,2,1,0}$ &$t_6\alpha_{2,0,1,0}+\alpha_{0,1,2,0}+\alpha_{1,1,1,0}$ &$t_6\alpha_{2,0,0,1}-\alpha_{1,0,0,2}$\\ \midrule
$L_{4,5}$ & $1$& $-t_4$& $-\delta_{4,5,6}q_{4,5,6}^-$ & $t_4\delta_{4,5,6}q_{4,5,6}^-$\\ & $\tfrac{t_4-t_6}{t_5-t_6}-\gamma_{4,5,6}q_{4,5,6}^-$& $-t_5(\tfrac{t_4-t_6}{t_5-t_6}-\gamma_{4,5,6}q_{4,5,6}^-)$& $-\eps_{4,5,6}q_{4,5,6}^-$ & $t_5\eps_{4,5,6}q_{4,5,6}^-$\\ \midrule
$L_{4,6}$ & $1$& $-t_4$& $-\delta_{4,6,5}q_{4,6,5}^-$ & $t_4\delta_{4,6,5}q_{4,6,5}^-$\\ & $\tfrac{t_4-t_5}{t_6-t_5}-\gamma_{4,6,5}q_{4,6,5}^-$& $-t_6(\tfrac{t_4-t_5}{t_6-t_5}-\gamma_{4,6,5}q_{4,6,5}^-)$& $-\eps_{4,6,5}q_{4,6,5}^-$ & $t_6\eps_{4,6,5}q_{4,6,5}^-$\\ \midrule
$L_{5,6}$ & $1$& $-t_5$& $-\delta_{5,6,4}q_{5,6,4}^-$ & $t_5\delta_{5,6,4}q_{5,6,4}^-$\\ & $\tfrac{t_5-t_4}{t_6-t_4}-\gamma_{5,6,4}q_{5,6,4}^-$& $-t_6(\tfrac{t_5-t_4}{t_6-t_4}-\gamma_{5,6,4}q_{5,6,4}^-)$& $-\eps_{5,6,4}q_{5,6,4}^-$ & $t_6\eps_{5,6,4}q_{5,6,4}^-$\\
\bottomrule
\end{tabular}
\end{adjustbox}
\end{table}

\end{document}